\DeclareMathOperator{\lip}{lip}
\DeclareMathOperator{\reg}{reg}
\DeclareMathOperator{\vecspan}{span}
\def\B{\mathbb{B}}
\def\R{\mathbb{R}}
\def\cov{{\rm cov}\,}
\def\gph{{\rm gph}\,}
\def\inte{{\rm int}\,}
\def\ker{{\rm ker}\,}
\def\emp{\emptyset}
\def\ox{\bar x}
\def\oy{\bar y}
\def\oz{\bar z}
\newtheorem{theorem}{Theorem}[section]
\newtheorem{remark}[theorem]{Remark}
\newtheorem{corollary}[theorem]{Corollary}
\newtheorem{definition}[theorem]{Definition}
\newtheorem{example}[theorem]{Example}
\newcommand{\da}{\substack{\rightarrow\\[-1em]\rightarrow}}
\begin{document}
\begin{abstract}
It is well known from the seminal Brockett's theorem that the openness property of the mapping on the right-hand side of a given nonlinear ODE control system is a necessary condition for the existence of locally asymptotically stabilizing continuous stationary feedback laws. However, this condition fails to be sufficient for such a feedback stabilization. In this paper we develop an approach of variational analysis to continuous feedback stabilization of nonlinear control systems with replacing openness by the {\em linear openness} property, which has been well understood and characterized in variational theory. It allows us, in particular, to obtain efficient conditions via the system data supporting the sufficiency in Brockett's theorem and ensuring local exponential stabilization by means of continuous stationary feedback laws. Furthermore, we derive new necessary conditions for local exponential and asymptotic stabilization of continuous-time control systems by using both continuous and smooth stationary feedback laws and establish also some counterparts of the obtained sufficient conditions for local asymptotic stabilization by continuous  stationary feedback laws in the case of nonlinear discrete-time control systems.
\end{abstract}

\title[]{Linear Openness and Feedback Stabilization\\of Nonlinear Control Systems}			
				
\author{Rohit Gupta}
\address{Institute for Mathematics and its Applications, University of Minnesota, Minneapolis, MN 55455}
\email{gupta311@umn.edu}

\author{Farhad Jafari}
\address{Department of Mathematics, University of Wyoming, Laramie, WY 82071}
\email{fjafari@uwyo.edu}

\author{Robert J. Kipka}
\address{Department of Mathematical Sciences, Kent State University at Stark, North Canton, OH 44720}
\email{rkipka@kent.edu}

\author{Boris S. Mordukhovich}
\address{Department of Mathematics, Wayne State University, Detroit, MI 48202 and RUDN University, Moscow 117198, Russian Federation}
\email{boris@math.wayne.edu}

\maketitle

\section{Introduction}
Consider the autonomous control system governed by nonlinear ordinary differential equations
\begin{eqnarray}
\dot{x}=f(x,u),\;t\ge 0,\label{cs}
\end{eqnarray}
where the vector function $f\colon U\times V\to\mathbb{R}^{n}$ on the right-hand side of (\ref{cs}) is sufficiently smooth on some open set $U\times V\subset\mathbb{R}^{n}\times\mathbb{R}^{m}$ around the origin $(0,0)$ as a given {\em equilibrium} pair. The continuous feedback stabilization property of our consideration is formulated as follows; see, e.g., \cite[Definition~10.11]{coron07}.

\begin{definition}\label{stab-def}
We say that the control system \eqref{cs} is {\sc locally asymptotically stabilizable} by means of {\sc continuous stationary feedback laws} if there exists a continuous control $u\in C(\mathbb{R}^{n},\mathbb{R}^{m})$ with $u(0)=0$ such that $0\in\R^n$ is a locally asymptotically stable equilibrium point for the closed-loop system
\begin{eqnarray}\label{closed-loop}
\dot{x}=f\big(x,u(x)\big).
\end{eqnarray}
\end{definition}

The question of whether \eqref{cs} can be locally asymptotically stabilized by means of continuous stationary feedback laws has been intensively studied in the literature with deriving some necessary as well as sufficient conditions for it; see, e.g., \cite{artstein83,brockett83,byrnes08,coron90,coron07,hermes91,sus94,sontag98,sontag99} and the references therein for a variety of results, discussions, and applications. In particular, the fundamental theorem by Brockett \cite{brockett83} shows that the control system (\ref{cs}) cannot be locally asymptotically stabilized by continuous stationary feedback laws if the {\em openness} property of $f$ is not satisfied at the equilibrium point $(0,0)$. In fact, Brockett's original result of the topological (mainly degree theoretic) nature \cite[Theorem~1]{brockett83} establishes the necessity of the openness property for the existence of a continuously differentiable (smooth) locally stabilizing feedback law. Its extension to merely continuous stationary feedback laws is given by Zabczyk \cite{zab89} by using a different device based on a deep result by Krasnoselskii and Zabreiko \cite{krasn83} from geometric nonlinear analysis. On the other hand, Coron \cite{coron90} and Sontag \cite{sontag99} construct impressive examples demonstrating that the openness property is not sufficient for the existence of such a locally asymptotically stabilizing continuous stationary feedback law.

In this paper we suggest to replace the openness property of $f$ by its {\em linear openness} counterpart, which is a basic property of single-valued and set-valued mappings that has been well understood and completely characterized (contrary to the openness property) in general settings of {\em variational analysis}; see Section~\ref{sec1a} for a brief overview and the books \cite{bor-zhu05,don-roc14,mor06a,mor06b,roc-wets98} with the extended bibliographies therein for numerous applications to various issues in optimization, equilibria, control, and practical models.

We believe that the developed approach of variational analysis and generalized differentiation has the strong potential for applications to a broader class of control systems, not just to smooth system \eqref{cs}, and discuss these issues in the concluding section. However, for the better understanding we confine ourselves here to the frameworks of smooth ODE systems of type \eqref{cs} and their discrete-time analogs. In the case of \eqref{cs} the developed variational approach allows us to verify that the linear openness of $f$ around the equilibrium point, being combined with a precise relationship between the exact bound of linear openness and unstable eigenvalues of the partial Jacobian matrix of $f$ in $x$, ensures the existence of a continuous stationary feedback law, which locally {\em exponentially} (and hence locally asymptotically) stabilizes \eqref{cs} via the closed-loop control system \eqref{closed-loop}. In this way we arrive at a partial converse to Brockett's theorem. Moreover, we justify the necessity of linear openness for local exponential stabilization of \eqref{cs} by means of continuous stationary feedback laws as well as the necessity of this property for local asymptotic stabilization of \eqref{cs} by means of smooth feedback laws under an additional assumption. Certain analogs of the obtained sufficient conditions for local asymptotic stabilization by means of continuous stationary feedback laws are establish also for the discrete-time nonlinear control counterpart of \eqref{cs}.

The rest of the paper is organized as follows. Section~\ref{sec1a} presents a brief overview of basic tools and results of variational analysis mainly concerning the linear openness property of single-valued and set-valued mappings as well as closely related properties of metric regularity and Lipschitzian stability, from both qualitative and quantitative viewpoints. The presented material justifies our variational approach to stabilizing nonlinear control systems not only of the smooth type \eqref{cs} and its discrete-time counterpart, but also for more general classes to be considered in future developments.

Section~\ref{sec2} is devoted to the implementation of this variational approach to establish sufficient as well as necessary conditions for stabilizing nonlinear continuous-time control systems \eqref{cs} by means of both continuous and smooth feedback laws. In Section~\ref{sec:disc} we deal with discrete-time control systems, Section~\ref{sec3} presents some illustrative examples, and the final Section~\ref{sec4} summaries the main achievements of the paper and discusses some perspectives of the variational approach for further research in this direction.

Throughout the paper we use standard notation of variational analysis and control theory; see, e.g., \cite{coron07,mor06a,roc-wets98}. Recall that $\mathbb{B}_{r}(z)$ stands for the closed ball centered at $z$ with radius $r>0$, while the symbol $\B$ signifies the closed unit ball of the space in question.

\section{Linear Openness and Related Properties of Nonlinear Mappings}\label{sec1a}

We start with recalling some properties of mappings used in the sequel. Most of the definitions below are valid or can be reformulated in general frameworks of normed and even metric spaces, but we need them in finite dimensions and so proceed accordingly.

\begin{definition}\label{open}
A single-valued mapping $f\colon\mathbb{R}^{l}\to\mathbb{R}^{n}$ is said to be {\sc open} at $\bar z\in\R^l$ if the $f$-image of every neighborhood of $\oz$ contains/covers a neighborhood of $f(\oz)$. This amounts to saying that
\begin{eqnarray}\label{open-prop}
f(\oz)\in\inte f(U)\;\mbox{ for any neighborhood }\;U\;\mbox{ of }\;\oz.
\end{eqnarray}
\end{definition}

The origin of this property goes back to the classical Banach-Schauder {\em open mapping} theorem saying that a bounded linear operator between Banach spaces is open if and only if it is surjective. The openness property \eqref{open-prop} of nonlinear vector functions $f$ was used by Brockett \cite{brockett83} as a necessary condition for the existence of locally asymptotically stabilizing continuous stationary feedback laws in the sense of Definition~\ref{stab-def} for the continuous-time control systems \eqref{cs}. We now recall an appropriate strengthening of \eqref{open-prop}, which has been recognized as a fundamental property in variational analysis for nonlinear (and even set-valued) mappings while allowing us to get the sufficiency in Brockett's theorem.

\begin{definition}\label{lin-open}
A mapping $f\colon\mathbb{R}^{l}\to\mathbb{R}^{n}$ is said to be {\sc linearly open} $($or having the {\sc covering property}$)$ around $\bar z$ with modulus $\kappa>0$ if there exists neighborhood $U$ of $\bar{z}$ such that
\begin{align}
\mathbb{B}_{\kappa r}\big(f(z)\big)\subset f\big(\mathbb{B}_{r}(z)\big)\;\mbox{ for any }\;z\in U\;\mbox{ and }\;r>0\;\mbox{ with }\;\mathbb{B}_{r}(z)\subset U.\label{cov}
\end{align}
The supremum of all the moduli $\{\kappa\}$ for which \eqref{cov} holds with some neighborhood $U$ is called the {\sc exact covering/linear openness bound} of $f$ around $\oz$ and is denoted by $\cov f(\oz)$.
\end{definition}

Property \eqref{cov} first appeared in \cite{dmo80} under the name of ``covering in a neighborhood'' while it has been introduced and popularized by Milyutin in his talks and personal communications a long time before the publication of \cite{dmo80}. The interest to designating and calculating the (quantitative) {\em exact bound} $\cov f(\oz)$ arose later on; see \cite{mor93,mor06a,roc-wets98} and the references therein. The term ``linear openness'' for \eqref{cov} was first used probably in \cite{roc-wets98}; this property is also widespread in the literature under the name of ``openness at linear rate."

It is clear that the linear openness property from Definition~\ref{lin-open} yields its openness counterpart from Definition~\ref{open}. Simple examples demonstrate that the opposite implication fails.

\begin{example}\label{exam-open}
{\rm Consider the real-valued function $f\colon\R\to\R$ given by $f(z)\colonequals z^3$. It is easy to check that $f$ possesses the openness property \eqref{open-prop} at $\oz=0$. However, the linear openness property \eqref{cov} obviously fails for $f$ around the same point. The reader can observe that the same phenomenon holds true for the power functions $f(z)\colonequals z^s$ with any odd natural exponent $s\ge 3$ with $\oz=0$.}
\end{example}

It is important to emphasize the following two principal differences between the openness property \eqref{open-prop} and its linear counterpart \eqref{cov} from Definition~\ref{lin-open}:
\begin{enumerate}[(i)]
\item{Property \eqref{cov} ensures the {\em uniformity} of covering {\em around} $\oz$.}
\item{Property \eqref{cov} designates a {\em linear rate} of openness {\em quantified} by modulus $\kappa$.}
\end{enumerate}

These two issues are crucial to establish complete characterizations of linear openness/covering with a precise calculation of the exact covering bound $\cov f(\oz)$. Before proceeding in this direction, let us recall another fundamental property of variational analysis, which happens to be equivalent to linear openness with the reciprocal relationship between the exact bounds of the corresponding moduli.

\begin{definition}\label{metreg}
A mapping $f\colon\mathbb{R}^{l}\to\mathbb{R}^{n}$ is said to be {\sc metrically regular} around $\bar z$ with modulus $\mu>0$ if there exist neighborhoods $U$ of $\oz$ and $V$ of $\bar{y}\colonequals f(\oz)$ such that the distance estimate
\begin{align}
d\big(z,f^{-1}(y)\big)\le\mu\|y-f(z)\|\;\mbox{ for any }\;z\in U\;\mbox{ and }\;y\in V\label{eqdef1}
\end{align}
holds via the distance function $d(z,\Omega)\colonequals\underset{u\in\Omega}{\inf}\;\|z-u\|$ for a given set $\Omega$. The infimum of all the moduli $\{\mu\}$ for which \eqref{eqdef1} is satisfied along with some neighborhoods $U$ and $V$ is called the {\sc exact regularity bound} of $f$ around $\oz$ and is denoted by $\reg f(\oz)$. Thus the absence of metric regularity is signaled by $\reg f(\bar{z})=\infty$.
\end{definition}

We refer the reader to the books \cite{bor-zhu05,don-roc14,mor06a,roc-wets98} and the bibliographies therein for the genesis of this property, its characterizations, various relationships, and numerous applications. In particular, it has been realized that metric regularity and covering properties are actually equivalent to each other and their exact bounds are reciprocally related as
\begin{eqnarray}\label{reg-cov}
\cov f(\oz)\cdot\reg f(\oz)=1.
\end{eqnarray}
They can also be equivalently described by {\em Lipschitzian} behavior of the (set-valued) inverse $f^{-1}\colon\mathbb{R}^{n}\da\mathbb{R}^{l}$ around $(f(\oz),\oz)$. Such a property is defined for arbitrary set-valued mappings as follows.

\begin{definition}\label{lip}
A set-valued mapping $F\colon\mathbb{R}^{l}\da\mathbb{R}^{n}$ is said to be {\sc Lipschitz-like} with modulus $\ell\ge 0$ around the pair $(\oz,\oy)$ belonging to the graph
$$
\gph F\colonequals\big\{(z,y)\in\mathbb{R}^{l}\times\mathbb{R}^{n}\big|\;y\in F(z)\big\}
$$
if there exist neighborhoods $U$ of $\oz$ and $V$ of $\oy$ such that
\begin{equation}\label{lip-l}
F(z)\cap V\subset F(u)+\ell\|z-u\|\B\;\mbox{ for all }\;z,u\in U.
\end{equation}
The infimum of all the moduli $\{\ell\}$ for which {\rm(\ref{lip-l})} holds with some neighborhoods $U$ and $V$ is called the {\sc exact Lipschitzian bound} of $F$ around $(\oz,\oy)$ and is denoted by $\lip F(\oz,\oy)$.
\end{definition}

If $F$ is single-valued, the Lipschitz-like property from Definition~\ref{lip} goes back to the classical local Lipschitzian behavior, while in the compact-valued case with $V=\R^n$ in \eqref{lip-l} it reduces to the Hausdorff local Lipschitzian property of multifunctions. In the general case of $V$ in (\ref{lip-l}) this condition is also known as the pseudo-Lipschitz or Aubin property. It can viewed as a natural graphical localization of Lipschitzian behavior for set-valued mappings. Note that in the case of single-valued mappings $F=f$ the exact Lipschitzian bound $\lip f(\oz)$ of $f$ around $\oz$ can be easily represented in the following way:
$$
\lip f(\bar{z})=\limsup_{z,u\to\bar{z}}\frac{\|f(z)-f(u)\|}{\|z-u\|}.
$$

It is known that the linear openness and metric regularity properties of $f$ around $\oz$ from Definition~\ref{lin-open} and Definition~\ref{metreg}, respectively, are equivalent to the Lipschitz-like property of Definition~\ref{lip} for the inverse mapping $F=f^{-1}$ around $(f(\oz),\oz)$ with the exact bound relationship
$$
\lip f^{-1}(\oy,\oz)=\reg f(\oz)\;\mbox{ where }\;\oy=f(\oz).
$$
Furthermore, the aforementioned equivalences and exact bound relationships hold true for general set-valued mappings between Banach spaces with appropriately extended definitions of covering and metric regularity; see \cite[Subsections~1.2.2 and 1.2.3]{mor06a} and the commentaries therein for a complete account.

When $f$ is of class $C^1$ as in our primary setting here, it has been shown independently by Lyusternik \cite{lus34} and Graves \cite{gra50} that the surjectively of the derivative operator $\nabla f(\oz)$, which amounts to the full rank condition for the Jacobian matrix $\nabla f(\oz)$ in finite dimensions, is sufficient for the openness property \eqref{open-prop} in \cite{gra50} and for a version of metric regularity (as a description of the tangent space to a smooth manifold) in \cite{lus34}. Note that neither Lyusternik nor Graves dealt with the linear openness and metric regularity properties defined above but their (rather close to each other) proofs were instrumental to verify the {\em sufficiency} of the surjectivity condition on $\nabla f(\oz)$ for the more delicate properties from Definitions~\ref{lin-open} and \ref{metreg} in the case of smooth single-valued mappings between arbitrary Banach spaces. However, they did not consider either the necessity of the surjectivity condition for the underlying properties, or the exact bound calculations.

It is implied by more general results of variational analysis (and has been first observed in its framework) that we actually have {\em complete qualitative and quantitative characterizations} of the basic properties defined above. The following theorem is taken from \cite[Theorem~1.57]{mor06a}, while being valid in general Banach spaces; see also the commentaries to \cite[Chapter~1]{mor06a} for further discussions. Recall that $f\colon\R^l\to\R^n$ is {\em strictly differentiable} at $\oz$ with the strict derivative $\nabla f(\oz)\colon\R^l\to\R^n$ (or the Jacobian matrix $\nabla f(\oz)$  in the finite-dimensional setting under consideration) if
\begin{eqnarray*}
\lim_{z,u\to\oz}\frac{f(z)-f(u)-\nabla f(\oz)(z-u)}{\|z-u\|}=0.
\end{eqnarray*}
This property always holds if $f$ is smooth (i.e., of class $C^1$) around $\oz$ but may be stronger than the standard Fr\'echet differentiability of $f$ at this point.

\begin{theorem}\label{lg}
Let $f\colon\R^l\to\R^n$ with $n\le l$ be strictly differentiable at $\oz$. Then $f$ enjoys the equivalent linear openness and metric regularity properties around $\oz$ if and only if the strict derivative $\nabla f(\oz)$ is surjective, i.e., the Jacobian matrix has full rank. Furthermore, the exact bounds of the linear openness and metric regularity of $f$ at $\oz$ are precisely calculated, respectively, by
\begin{eqnarray}\label{cov-mod}
\cov f(\oz)=\min\big\{\|\nabla f(\oz)^*v\|\;\big|\;\|v\|=1\big\}\;\mbox{ and }\;\reg f(\oz)=\big\|\big(\nabla f(\oz)^*\big)^{-1}\big\|,
\end{eqnarray}
where the symbol ``$^*$'' stands for the adjoint operator $($matrix transposition$)$, and where the last norm in \eqref{cov-mod} is the standard norm for linear bounded operators applied to the inverse operator $(\nabla f(\oz)^*)^{-1}$, which is single-valued in the case of the surjectivity of the derivative operator $\nabla f(\oz)$.
\end{theorem}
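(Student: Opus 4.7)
The plan is to reduce the claim to its linear counterpart at $\oz$ by proving the key equality $\cov f(\oz)=\cov A$ where $A\colonequals\nabla f(\oz)$, and then combining it with the standard linear-algebraic identities $\cov A=\min\{\|A^*v\|\colon\|v\|=1\}$ and, when $A$ is surjective, $1/\cov A=\|(A^*)^{-1}\|$; the regularity formula will then follow from the already-stated reciprocal relation \eqref{reg-cov}. The only analytic input is the strict-differentiability estimate: for each $\varepsilon>0$ there is a neighborhood $U_\varepsilon$ of $\oz$ on which $\|f(z)-f(u)-A(z-u)\|\le\varepsilon\|z-u\|$ for all $z,u\in U_\varepsilon$.

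For the sufficiency inequality $\cov f(\oz)\ge\cov A$, I would fix any $\kappa<\cov A$, choose $\varepsilon\in(0,\cov A-\kappa)$, and work inside $U\colonequals U_\varepsilon$. Given $z\in U$ with $\B_r(z)\subset U$ and a target $y\in\B_{\kappa r}(f(z))$, I would run a Lyusternik-Graves iteration: set $z_0\colonequals z$; assuming $z_k$ is chosen, use the linear covering of $A$ to select $z_{k+1}$ satisfying $A(z_{k+1}-z_k)=y-f(z_k)$ with $\|z_{k+1}-z_k\|\le\|y-f(z_k)\|/\cov A$. The strict-differentiability estimate then yields $\|y-f(z_{k+1})\|\le\varepsilon\|z_{k+1}-z_k\|\le(\varepsilon/\cov A)\|y-f(z_k)\|$, so residuals and displacements shrink geometrically with ratio $\varepsilon/\cov A<1$. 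Summing the geometric series gives $\|z_k-z\|\le\kappa r/(\cov A-\varepsilon)\le r$, so every iterate stays in $\B_r(z)\subset U$ (keeping the linearization estimate valid) and the sequence is Cauchy; its limit $w$ satisfies $f(w)=y$ and $\|w-z\|\le r$. Hence $\B_{\kappa r}(f(z))\subset f(\B_r(z))$, and letting $\kappa\uparrow\cov A$ gives $\cov f(\oz)\ge\cov A$.

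For the reverse inequality $\cov f(\oz)\le\cov A$, which in particular forces surjectivity of $A$ whenever $f$ is linearly open around $\oz$, I would take $\kappa<\cov f(\oz)$ and pick an arbitrary $y\in\R^n$ with $\|y\|\le 1$. For all sufficiently small $r>0$ the inclusion $\B_{\kappa r}(f(\oz))\subset f(\B_r(\oz))$ produces $z_r\in\B_r(\oz)$ with $f(z_r)-f(\oz)=\kappa r y$. Setting $w_r\colonequals(z_r-\oz)/r\in\B_1(0)$ and invoking Fr\'echet differentiability (implied by strict), the expansion $f(z_r)-f(\oz)=A(z_r-\oz)+o(\|z_r-\oz\|)$ gives $A(w_r)=\kappa y+o(1)$ as $r\downarrow 0$. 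Extracting a convergent subsequence $w_{r_j}\to w^*\in\B_1(0)$ from the compact unit ball yields $A(w^*)=\kappa y$; as $y$ was arbitrary, this shows $\B_\kappa(0)\subset A(\B_1(0))$, so $A$ is surjective with $\cov A\ge\kappa$, and $\kappa\uparrow\cov f(\oz)$ closes the loop.

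The two exact-bound formulas in \eqref{cov-mod} are then pure linear algebra: for a surjective $A$, the identity $\|A^*v\|^2=v^\top AA^*v$ and the spectral theorem applied to the positive definite matrix $AA^*$ yield $\min\{\|A^*v\|\colon\|v\|=1\}=\sigma_{\min}(A)=\cov A$, while the injectivity of $A^*$ gives $\|(A^*)^{-1}\|=1/\sigma_{\min}(A)$, which matches $\reg f(\oz)$ by \eqref{reg-cov}. I expect the principal technical obstacle to be the bookkeeping in the Lyusternik-Graves iteration: one must verify that every iterate stays inside the neighborhood where the linearization estimate is available and that the cumulative displacement does not leave $\B_r(z)$. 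Both points rest on the strict inequality $\varepsilon<\cov A-\kappa$ and on the convergent geometric sum $\sum_{k\ge 0}(\varepsilon/\cov A)^k=\cov A/(\cov A-\varepsilon)$, which must be arranged at the outset by an appropriate choice of $\varepsilon$ before $U_\varepsilon$ is fixed.
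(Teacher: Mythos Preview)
The paper does not actually prove this theorem: it is quoted verbatim from \cite[Theorem~1.57]{mor06a} and used as a black box throughout, with no argument supplied in the text. So there is nothing to compare against line by line.

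That said, your proposal is correct and is precisely the classical route one finds in the cited source and in the Lyusternik--Graves literature more broadly: reduce to the linear map $A=\nabla f(\oz)$, obtain the lower bound $\cov f(\oz)\ge\cov A$ via the Newton-type iteration driven by strict differentiability, obtain the upper bound by a blow-up/compactness argument, and finish with the singular-value identities $\cov A=\sigma_{\min}(A)=\min_{\|v\|=1}\|A^*v\|$ and $\|(A^*)^{-1}\|=1/\sigma_{\min}(A)$. Your bookkeeping is right: with $\varepsilon<\cov A-\kappa$ the cumulative displacement is bounded by $\kappa r/(\cov A-\varepsilon)<r$, so an easy induction keeps every iterate inside $\B_r(z)\subset U_\varepsilon$ and the strict-differentiability estimate remains available at each step. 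Two small cosmetic points: (i) in the iteration you are implicitly assuming $\cov A>0$, which is harmless since the inequality $\cov f(\oz)\ge\cov A$ is trivial otherwise; (ii) the compactness step in the reverse inequality uses finite dimensionality of $\R^l$, which is fine here but is the place where the Banach-space version requires a different device. Neither affects the validity of what you wrote in the present $\R^l\to\R^n$ setting.
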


Having in mind further possible extensions (see more in Section~\ref{sec4}) of our approach and results to {\em nonsmooth} control systems \eqref{cs} and their {\em set-valued} versions $\dot{x}\in F(x)$ as well as to partial differential counterparts, we formulate now for the reader's convenience generalized differential characterizations of the aforementioned basic properties of variational analysis in the general framework of set-valued mappings $F\colon\R^l\da\R^n$ with closed graphs. This is done by using the {\em coderivative} $D^*F(\oz,\oy)\colon\R^{n}\da\R^l$ of $F$ at $(\oz,\oy)\in\gph F$, which we are not going to define here while recalling that
\begin{eqnarray}\label{cod-sm}
D^*f(\oz)(v)=\big\{\nabla f(\oz)^*v\}\;\mbox{ for all }\;v\in\R^n
\end{eqnarray}
provided that the mapping $F=f$ is single-valued  and strictly differentiable at $\ox$; in particular,  if it is of class $C^1$ as in the case studied in this paper. In general, the coderivative is a positively homogeneous mapping that enjoys comprehensive {\em calculus rules} based on {\em variational/extremal principles} of variational analysis. We refer the reader to the books \cite{mor06a,mor06b,roc-wets98} and the commentaries therein for exact results, discussions, extended bibliographies,  and various applications.

In particular, a complete characterization of the Lipschitzian property from Definition~\ref{lip} for $F$ around $(\oz,\oy)$ with the precise calculation of the exact bound $\lip F(\oz,\oy)$ obtained in \cite[Theorem~5.7]{mor93} reads as
\begin{eqnarray}\label{crit}
D^*F(\oz,\oy)(0)=\big\{0\big\}\;\mbox{ with }\;\lip F(\oz,\oy)=\big\|D^*F(\oz,\oy)\big\|,
\end{eqnarray}
where $\|\cdot\|$ stands for the usual norm of a positively homogeneous mapping $G\colon\R^n\da\R^l$ defined by
$$
\|G\|\colonequals\sup\big\{\|u\|\;\big|\;u\in G(v),\;\|v\|\le 1\big\}.
$$
Another proof of \eqref{crit} is given in \cite[Theorem~9.40]{roc-wets98} under the name of ``Mordukhovich criterion." The reader can find an infinite-dimensional extension of \eqref{crit} in \cite[Theorem~4.10]{mor06a}. The corresponding counterparts of \eqref{crit} for the (inverse) equivalent properties of metric regularity and linear openness are
\begin{eqnarray}\label{reg-crit}
\begin{array}{ll}
\ker D^*F(\oz,\oy)=\big\{0\big\}\;\mbox{ with }\;\reg F(\oz,\oy)=\|D^*F(\oz,\oy)^{-1}\|=\|D^*F^{-1}(\oy,\oz)\|,\\\\
\qquad\qquad\cov F(\oz,\oy)=\inf\big\{\|u\|\;\big|\;u\in D^*F(\oz,\oy)(v),\;\|v\|=1\big\},
\end{array}
\end{eqnarray}
where $\ker G\colonequals\{v\in\R^n|\;0\in G(v)\}$. Due to the coderivative representation \eqref{cod-sm} for strictly differentiable single-valued mappings, the kernel coderivative criterion for metric regularity and linear openness in \eqref{reg-crit} reduces to the surjectivity of $\nabla f(\oz)$ while the exact bound formulas in \eqref{reg-crit} get back to \eqref{cov-mod} for such mappings as stated in Theorem~\ref{lg} above.

After the given brief overview of available tools of variational analysis related to linear openness, we are now ready to proceed with the implementation of the variational approach in the case of smooth continuous-time control systems \eqref{cs} as well for the corresponding ones with discrete time.

\section{Exponential and Asymptotic Stabilization of Continuous-Time Control Systems}\label{sec2}

Consider first the continuous-time control system \eqref{cs} and recall the notion of its {\em local exponential stabilization} by using continuous stationary feedback laws. The main attention here is paid to the following stabilization notion for the nonlinear control system \eqref{cs}.

\begin{definition}\label{stabil}
The control system \eqref{cs} is said to be {\sc locally exponentially stabilizable} if there exist a continuous stationary feedback law $u(x)$ with $u(0)=0$ and constants $\alpha>0$, $M>0$, and $\delta>0$ such that for any starting point $x_0\in\R^n$ with $\|x_0\|<\delta$ there exists a unique solution $x(t,x_0)$ of the closed-loop control system \eqref{closed-loop} satisfying $x(0,x_0)=x_0$ and the exponential decay condition
\begin{eqnarray*}
\|x(t,x_0)\|\le M e^{-at}\|x_0\|\;\mbox{ whenever }\;t\ge 0.
\end{eqnarray*}
\end{definition}

To formulate the following main result of the paper, consider the partial Jacobian matrices
\begin{eqnarray}\label{A}
A\colonequals\nabla_x f(0,0)\;\mbox{ and }\;B\colonequals\nabla_u f(0,0),
\end{eqnarray}
for $f\in C^1$, the (complex) {\em spectrum} $\Lambda(A)$ of $A$, and the collection of {\em eigenvalues with nonnegative real parts}
\begin{eqnarray}\label{lamb+}
\Lambda_+(A)\colonequals\big\{\lambda\in\Lambda(A)\big|\;\lambda\in\mathbb{C}\;\mbox{ \textnormal{with} }\;{\rm Re}(\lambda)\ge 0\big\}
\end{eqnarray}
of $A$ in the case of the continuous-time control system \eqref{cs}.

\begin{theorem}\label{th1}
Let $f\colon\R^n\times\R^m\to\R^n$ be of class $C^{1}$ in a neighborhood of the equilibrium pair $(0,0)$, let the matrices $A$ and $B$ be defined in \eqref{A}, and let $\Lambda_+(A)\subset\R$. Assume in addition that:

{\bf(C)} $f$ is linearly open around $(0,0)$ with $\cov f(0,0)>\kappa$ for some positive constant $\kappa$ satisfying
\begin{eqnarray}\label{kappa}
\kappa>\eta_c\colonequals\sup_{\lambda\in\Lambda_+(A)}\lambda,
\end{eqnarray}
where $\sup\emp\colonequals-\infty$. Then the control system \eqref{cs} can be locally exponentially stabilized by means of continuous stationary feedback laws. Furthermore, the linear openness of $f$ around $(0,0)$ is necessary for such a local exponential stabilization of \eqref{cs} without any additional assumptions on the system data if continuously differentiable stationary feedback laws are used instead of merely continuous ones.
\end{theorem}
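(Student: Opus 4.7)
The plan is to decompose the proof into a sufficiency argument via the Popov--Belevitch--Hautus (PBH) stabilizability test, followed by a necessity argument via a converse Lyapunov step. I would first invoke Theorem~\ref{lg} at $\oz=(0,0)$ to rewrite condition (C) in algebraic terms: $\nabla f(0,0)=[A\ \ B]$ has full row rank $n$, and
\begin{eqnarray*}
\cov f(0,0)=\min_{\|v\|=1}\sqrt{\|A^* v\|^2+\|B^* v\|^2}.
\end{eqnarray*}
Since $\Lambda_+(A)\subset\R$, every $\lambda\in\Lambda_+(A)$ is a nonnegative real number with $\lambda\le\eta_c$ and admits a real unit left eigenvector $v$ satisfying $A^* v=\lambda v$. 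For such $v$,
\begin{eqnarray*}
\lambda^2+\|B^* v\|^2\ge\cov f(0,0)^2>\kappa^2>\lambda^2,
\end{eqnarray*}
so $B^* v\ne 0$. This is precisely the PBH stabilizability criterion for the pair $(A,B)$, hence there exists a real matrix $K\in\R^{m\times n}$ with $A+BK$ Hurwitz. Taking $u(x)\colonequals Kx$ yields a continuous (indeed linear) stationary feedback with $u(0)=0$, and the Jacobian of $g(x)\colonequals f(x,Kx)$ at $0$ equals $A+BK$. Lyapunov's indirect method then gives local exponential stability of the closed-loop system $\dot x=g(x)$.

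For the necessity, I would start from a hypothetical $C^1$ feedback $u$ with $u(0)=0$ locally exponentially stabilizing \eqref{cs}. Writing $g(x)\colonequals f(x,u(x))$ and $K\colonequals\nabla u(0)$, one has $g\in C^1$, $g(0)=0$, and $\nabla g(0)=A+BK$. The key step is the converse of Lyapunov's first method: local exponential stability of a $C^1$ ODE at an equilibrium forces the Jacobian there to be Hurwitz. In particular $A+BK$ is nonsingular, so $[A+BK\ \ B]$ has row rank $n$. The factorization
\begin{eqnarray*}
[A+BK\ \ B]=[A\ \ B]\begin{pmatrix}I_n & 0\\ K & I_m\end{pmatrix},
\end{eqnarray*}
whose right factor is invertible, transfers this rank to $[A\ \ B]=\nabla f(0,0)$. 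One final appeal to Theorem~\ref{lg} then delivers the linear openness of $f$ around $(0,0)$.

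The principal technical hurdle is the converse Lyapunov step in the necessity argument, namely that exponential---not merely asymptotic---stability in the $C^1$ category rules out eigenvalues of $\nabla g(0)$ in the closed right half plane; this is the classical content of quadratic converse Lyapunov theorems for exponentially stable equilibria, and it is precisely what forces the feedback to be $C^1$ rather than only continuous in this direction, since otherwise $\nabla g(0)$ need not even exist. A smaller but essential detail on the sufficiency side is the strict inequality $\kappa>\eta_c$, which is what guarantees $\kappa^2-\lambda^2>0$ in the PBH computation; equality would leave open the possibility of an unstable mode with $B^* v=0$ that could not be stabilized by any linear feedback.
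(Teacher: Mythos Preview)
Your proposal is correct and reaches the same intermediate checkpoints as the paper---the PBH/Hautus stabilizability test for the sufficiency and the rank condition \eqref{eqrem2} for the necessity---but the routes to those checkpoints differ in interesting ways.

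For the sufficiency, the paper does \emph{not} evaluate the explicit formula \eqref{cov-mod} on left eigenvectors as you do. Instead it perturbs $f$ to $f_\nu(x,u)\colonequals f(x,u)-\nu x$, invokes a stability result for linear openness under Lipschitzian perturbations (the covering analogue of \cite[Theorem~4.25]{mor06a}) to get $\cov f_\nu(0,0)>\kappa-\nu>0$ for every $\nu\in[0,\kappa)$, and then applies the \emph{necessity} direction of Theorem~\ref{lg} to conclude that $\nabla f_\lambda(0,0)=[A-\lambda I\mid B]$ has full rank for each $\lambda\in\Lambda_+(A)$. Your eigenvector computation reaches the same rank conclusion with nothing beyond the formula in \eqref{cov-mod}, which is more elementary; the paper's route, by contrast, is chosen to showcase the variational-analytic machinery (perturbation stability of covering) that is the thematic point of the article.

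For the necessity, the paper again takes a longer path: it passes from exponential stabilizability of \eqref{cs} by a $C^1$ law to exponential stabilizability of the linearization \eqref{lineari} (via \cite{zab89}), then through the full Hautus test \eqref{haut-c}, and finally specializes to $\lambda=0$ to obtain \eqref{eqrem2}. Your factorization
\[
[A+BK\mid B]=[A\mid B]\begin{pmatrix}I_n&0\\K&I_m\end{pmatrix}
\]
short-circuits this: once the converse Lyapunov step gives $A+BK$ Hurwitz (hence nonsingular), the rank of $[A\mid B]$ follows immediately. This is cleaner, though it hides the stronger conclusion \eqref{haut-c} that the paper obtains along the way.
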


\begin{proof}
The verifications of both sufficient and necessary parts of the theorem strongly involve the usage of the complete linear openness characterization given in Theorem~\ref{lg}.

We start with proving the {\em sufficiency} statement of this theorem. Besides Theorem~\ref{lg}, it employs a stability property for linear openness with respect to small Lipschitzian perturbations along with some results well understood in controllability and stabilization theories for smooth control systems. To proceed in this way, for any constant $\nu>0$ we consider the perturbed vector function
\begin{eqnarray*}
f_{\nu}(x,u)\colonequals f(x,u)+g_{\nu}(x,u)\;\mbox{ with }\;g_{\nu}(x,u)\colonequals-\nu x.
\end{eqnarray*}
It is obvious that $\lip g_{\nu}(0,0)=\nu$. Taking into account the equivalence between the linear openness and metric regularity properties discussed Section~\ref{sec1a} with the exact bound relationship \eqref{reg-cov}, we deduce from the linear openness version of \cite[Theorem~4.25]{mor06a} that
\begin{eqnarray}\label{cov-est}
\cov f_\nu(0,0)>\kappa-\nu>0\;\mbox{ whenever }\;\nu\in[0,\kappa).
\end{eqnarray}
Thus the derivative operator $\nabla f_{\nu}|_{(0,0)}\colon\R^n\times\R^m\to\R^n$ is surjective for such $\nu$ due to the {\em necessity} of the surjectivity (i.e., full rank of the Jacobian matrix $\nabla f_{\nu}(0,0)$) condition for the linear openness property from Theorem~\ref{lg}. Furthermore, condition \eqref{kappa} in assumption {\bf(C)} implies that
\begin{eqnarray*}
\cov f_{\lambda}(0,0)>\kappa-\lambda>0\;\mbox{ for all }\;\lambda\in\Lambda_+(A),
\end{eqnarray*}
and hence the operator $\nabla f_{\lambda}|_{(0,0)}\colon\R^n\times\R^m\to\R^n$ is surjective for all $\lambda\in\Lambda_+(A)$ by Theorem~\ref{lg}. The latter condition can be equivalently written in the rank form
\begin{align}
\text{rank}\;\big[A-\lambda I\mid B\big]=n\;\mbox{ for all }\;\lambda\in\Lambda_+(A).\label{eqpf1}
\end{align}
Since \eqref{eqpf1} is the well-known {\em Hautus test} for {\em asymptotic controllability} of linear autonomous systems \cite{hautus70,sontag98}, we have that the linearized control system
\begin{eqnarray}\label{lineari}
\dot x=Ax+Bu,\;t\ge 0,
\end{eqnarray}
with $A$ and $B$ from \eqref{A}, is asymptotically controllable. Employing now \cite[Theorem~4]{hautus70} and its proof for the case of continuous-time  control systems allows us to conclude that the linearized system \eqref{lineari} is exponentially stabilizable, which in turn yields as in \cite[Chapter~6; see, e.g., Exercise~6 on p.\ 392]{lee-mar67} the local exponential stabilization of the original nonlinear system \eqref{cs} and hence verifies the sufficiency part of the theorem.

To justify next the remaining {\em necessity} statement of this theorem, we assume that the nonlinear control system \eqref{cs} is exponentially stabilizable by means of {\em smooth} stationary feedback laws. It is equivalent to the exponential stabilization of the linearized system \eqref{lineari} (see, e.g., \cite[Propositions~1 and 2]{zab89}) with the possibility to use even linear feedback laws therein. The latter implies, by the equivalence between (i) and (iii) in \cite[Theorem~4]{hautus70} for continuous-time control systems, that the linearized system \eqref{lineari} is asymptotically controllable. Thus the Hautus test for asymptotic controllability of linear autonomous continuous-time control systems tells us, due to the equivalence between (i) and (ii) in \cite[Theorem~4]{hautus70}, that
\begin{eqnarray}\label{haut-c}
\text{rank}\;\big[A-\lambda I\mid B\big]=n\;\mbox{ for all complex numbers }\;\lambda\;\mbox{ with }\;\mbox{Re}(\lambda)\ge 0;
\end{eqnarray}
see, e.g., \cite[Exercise~5.5.7]{sontag98}. Plugging now $\lambda=0$ into \eqref{haut-c} gives us the simple rank condition
\begin{eqnarray}\label{eqrem2}
\textnormal{rank}\;\big[A\mid B\big]=n,
\end{eqnarray}
which ensures, by the {\em sufficiency part} of the linear openness characterization in Theorem~\ref{lg}, that $f$ is linearly open around $(0,0)$. This completes the proof of the theorem.
\end{proof}

\begin{remark}\label{kalman}
{\rm We get from the characterization of linear openness presented in Theorem~\ref{lg} that the vector function $f$ from Theorem~\ref{th1} enjoys this property if and only if the rank condition \eqref{eqrem2} holds, which is surely different from the classical Kalman rank condition for controllability of linear autonomous continuous-time control systems; see, e.g., \cite{lee-mar67}. As follows from the above proof of Theorem~\ref{th1}, the additional assumptions in {\bf(C)} allow us to justify the validity of the Hautus test for asymptotic controllability \eqref{eqpf1} of the linearized system \eqref{lineari}. It is clear that the opposite implication does not hold, i.e., \eqref{eqrem2} does not imply \eqref{eqpf1}. Indeed, there are simple examples of linear control systems, which are not controllable while satisfying the underlying rank condition \eqref{eqrem2}; see, e.g., $\dot x_1=x_1,\;\dot x_2=u$.}
\end{remark}

The following two consequences of Theorem~\ref{th1} provide some specifications of this result for particular subclasses of continuous-time control \eqref{cs}. They are similar to those observed by Brockett \cite{brockett83} in the case of local asymptotic stabilization of \eqref{cs}.

\begin{corollary}\label{cor1}
Suppose that in the framework of Theorem~{\rm\ref{th1}} we have
\begin{eqnarray}\label{cor1-cs}
f(x,u)\colonequals g_{0}(x)+\sum_{i=1}^{m}g_{i}(x)u_{i},
\end{eqnarray}
where $\vecspan\{g_{0}(\cdot),\hdots,g_{m}(\cdot)\}=\R^{d}$. If $d<n$, then the control system \eqref{cs} cannot be locally exponentially stabilized by means of continuously differentiable stationary feedback laws.
\end{corollary}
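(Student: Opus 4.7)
The plan is to argue by contradiction, combining the necessity part of Theorem~\ref{th1} with the Lyusternik--Graves characterization in Theorem~\ref{lg}. Assume that \eqref{cs} with the control-affine structure \eqref{cor1-cs} admits a continuously differentiable stationary feedback law that locally exponentially stabilizes it. The necessity statement in Theorem~\ref{th1} then forces $f$ to be linearly open around $(0,0)$, and Theorem~\ref{lg} in turn forces the full row rank condition $\text{rank}\,[A \mid B] = n$ on the Jacobian of $f$ at the equilibrium.

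I would then derive a contradiction from the span hypothesis. Let $V \subset \R^n$ be the distinguished $d$-dimensional subspace spanned by the values of $g_0, g_1, \ldots, g_m$. Under the control-affine structure we have $A = \nabla g_0(0)$ and $B = [g_1(0) \mid \cdots \mid g_m(0)]$. Each column $g_i(0)$ of $B$ trivially lies in $V$, and each column of $A$, being a directional derivative of $g_0$ at the origin, is a limit of $V$-valued difference quotients, hence lies in $V$ because $V$ is a closed subspace. Consequently the column space of $[A \mid B]$ is contained in $V$, so $\text{rank}\,[A \mid B] \le d < n$, contradicting the previous paragraph.

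The only subtlety I anticipate in executing this plan is fixing the correct reading of the hypothesis $\vecspan\{g_0(\cdot), \ldots, g_m(\cdot)\} = \R^{d}$ --- namely, that the union of the images of the $g_i$ is contained in a common $d$-dimensional subspace of $\R^n$ --- and then noting that this structural constraint is inherited by the Jacobian $\nabla g_0(0)$ rather than merely by the value $g_0(0)$. Once these points are in place the corollary is a routine consequence of Theorems~\ref{th1} and \ref{lg}, with no additional machinery required.
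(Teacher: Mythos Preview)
Your proposal is correct and follows essentially the same approach as the paper's proof: argue by contradiction, invoke the necessity part of Theorem~\ref{th1} to obtain linear openness of $f$ around $(0,0)$, and then contradict this with the span constraint $d<n$. The only minor difference is that the paper obtains the final contradiction by citing Brockett's remark in \cite{brockett83} (the image of $f$ lies in a proper subspace, so $f$ cannot even be open), whereas you make the argument self-contained by showing directly that the columns of $[A\mid B]$ lie in the $d$-dimensional subspace $V$, forcing $\text{rank}\,[A\mid B]\le d<n$.
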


\begin{proof}
Assume that \eqref{cs} is locally exponentially stabilizable by a smooth stationary feedback law. Then the necessity part of Theorem~\ref{th1} yields the validity of the rank condition \eqref{eqrem2}, i.e., the linear openness of $f$ around $(0,0)$ by Theorem~\ref{lg}. The conclusion of this corollary can be deduced similarly to Brockett's observation made in \cite[the first Remark on p.\ 187]{brockett83}.
\end{proof}

\begin{corollary}\label{cor2}
Suppose that in the framework of Theorem~{\rm\ref{th1}} we have
\begin{eqnarray*}
f(x,u)\colonequals\sum_{i=1}^{m}g_{i}(x)u_{i},
\end{eqnarray*}
where the vectors $\{g_{i}(0)\}_{i=1}^{m}$ are linearly independent. Then the control system \eqref{cs} can be locally exponentially stabilized by means of continuously differentiable stationary feedback laws if and only if $m=n$.
\end{corollary}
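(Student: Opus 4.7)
The plan is to compute the linearization data of the system at the equilibrium and then apply both halves of Theorem~\ref{th1} together with the rank characterization of linear openness from Theorem~\ref{lg}. First I would compute the partial Jacobians of $f(x,u)=\sum_{i=1}^{m}g_{i}(x)u_{i}$ at $(0,0)$. Since every summand is linear in $u$ and vanishes at $u=0$, one gets $A=\nabla_{x}f(0,0)=0$ (the $n\times n$ zero matrix), while $B=\nabla_{u}f(0,0)$ is the $n\times m$ matrix whose columns are exactly $g_{1}(0),\dots,g_{m}(0)$. The linear independence hypothesis then gives $\mathrm{rank}\,B=m$, so $m\le n$ automatically and $\mathrm{rank}\,[A\mid B]=\mathrm{rank}\,B=m$.

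For the necessity direction ($\Rightarrow$), suppose that \eqref{cs} is locally exponentially stabilizable by a continuously differentiable stationary feedback law. The necessity part of Theorem~\ref{th1} yields that $f$ is linearly open around $(0,0)$, and then Theorem~\ref{lg} (or equivalently the rank identity \eqref{eqrem2}) forces $\mathrm{rank}\,[A\mid B]=n$. Combined with the computation above this gives $m=n$.

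For the sufficiency direction ($\Leftarrow$), suppose $m=n$. Then $B$ is a square invertible matrix, so $\mathrm{rank}\,[A\mid B]=n$; by Theorem~\ref{lg} this is equivalent to $f$ being linearly open around $(0,0)$ with $\cov f(0,0)>0$. Because $A=0$, its spectrum is $\{0\}\subset\R$, so the hypothesis $\Lambda_{+}(A)\subset\R$ of Theorem~\ref{th1} is automatic and $\eta_{c}=0$. Picking any $\kappa\in(0,\cov f(0,0))$ verifies condition~\textbf{(C)}, so Theorem~\ref{th1} delivers local exponential stabilization. Inspecting that proof, the stabilizing feedback is produced linearly: here the Hautus test $\mathrm{rank}\,[A-\lambda I\mid B]=n$ holds trivially for every $\lambda$ since $B$ is invertible, and the explicit choice $u(x)=-B^{-1}x$ makes the linearization $\dot x=(A+B(-B^{-1}))x=-x$ Hurwitz, so the linearization principle from \cite{lee-mar67} yields local exponential stability of the closed-loop nonlinear system. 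Since this $u$ is linear, it is in particular $C^{1}$.

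The point I expect to be the main obstacle is the regularity mismatch between the sufficiency part of Theorem~\ref{th1}, which is stated for \emph{continuous} feedback, and the corollary, which asks for $C^{1}$ feedback. This is not a serious difficulty because the construction inside Theorem~\ref{th1} routes through the Hautus test and the linearization principle, both of which naturally produce a \emph{linear} (hence smooth) feedback; in the special case $A=0$ with $B$ invertible this is particularly transparent via $u(x)=-B^{-1}x$. Thus the proof reduces to keeping careful track of the fact that the feedback supplied by Theorem~\ref{th1} can be chosen linear.
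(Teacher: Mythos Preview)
Your proof is correct and follows essentially the same approach as the paper: the paper's one-line proof simply cites Theorem~\ref{th1} together with Corollary~\ref{cor1}, whereas you unfold the rank computation $A=0$, $\mathrm{rank}\,B=m$ directly rather than invoking Corollary~\ref{cor1} for the necessity direction. Your explicit attention to the $C^{1}$-versus-continuous mismatch in the sufficiency direction (resolved by observing that the feedback produced via the Hautus test in the proof of Theorem~\ref{th1} is linear) is a detail the paper leaves implicit.
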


\begin{proof}
This follows from combining Theorem~\ref{th1} and Corollary~\ref{cor1}.
\end{proof}

\begin{remark}\label{rem-ind}
{\rm It is crucial to assume in Corollary~\ref{cor2} that the vectors $\{g_{i}(0)\}_{i=1}^{m}$ are linearly independent; cf. \cite{brockett83} and \cite{sontag99} for the case of local asymptotic stabilization.}
\end{remark}

Let us now present several consequences of the sufficiency in Theorem~\ref{th1}.

\begin{corollary}\label{spectrum}
Suppose that in the setting of Theorem~{\rm\ref{th1}} we have $\Lambda(A)\subset\R$ and that the number $\eta_c$ in assumption {\bf(C)} is replaced by the quantity
\begin{eqnarray*}
\Tilde\eta_c\colonequals\max_{i\in\{1,\hdots,n\}}\big\{|\lambda_{i}|\;\big|\;\lambda_{i}\in\Lambda(A)\big\}
\end{eqnarray*}
calculated along the entire spectrum $\Lambda(A)$ of the matrix $A$ instead of just its nonnegative real part \eqref{lamb+}. Then the control system \eqref{cs} can be locally exponentially stabilized by means of continuous stationary feedback laws.
\end{corollary}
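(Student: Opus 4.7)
The plan is to recognize that Corollary~\ref{spectrum} is essentially a monotonicity observation: the hypothesis proposed is strictly stronger than assumption \textbf{(C)} of Theorem~\ref{th1}, so the conclusion will follow by a direct appeal to that theorem with no further machinery. Accordingly, I would not attempt a fresh stability argument; I would only verify the inequality linking the two threshold quantities and then quote Theorem~\ref{th1}.

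Concretely, I would first establish $\eta_c\le\Tilde\eta_c$. Since $\Lambda(A)\subset\R$ by assumption, every $\lambda\in\Lambda_+(A)$ is a nonnegative real number, so $\lambda=|\lambda|\le\max_{i\in\{1,\dots,n\}}|\lambda_i|=\Tilde\eta_c$. Taking the supremum over $\lambda\in\Lambda_+(A)$ yields $\eta_c\le\Tilde\eta_c$; the case $\Lambda_+(A)=\emp$ is handled trivially by the convention $\sup\emp\colonequals-\infty$ already adopted in \eqref{kappa}. Next, any positive constant $\kappa$ supplied by the strengthened hypothesis satisfies $\cov f(0,0)>\kappa>\Tilde\eta_c\ge\eta_c$, so the triple $(\cov f(0,0),\kappa,\eta_c)$ meets condition \textbf{(C)} of Theorem~\ref{th1} verbatim. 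Applying Theorem~\ref{th1} produces the required continuous stationary feedback law $u\in C(\R^n,\R^m)$ with $u(0)=0$ that locally exponentially stabilizes \eqref{cs}.

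There is no genuine obstacle in this argument; the only point requiring a bit of care is the bookkeeping around $\sup\emp$ and the sign of real eigenvalues, which is the reason the comparison $\eta_c\le\Tilde\eta_c$ uses the assumption $\Lambda(A)\subset\R$ rather than just $\Lambda_+(A)\subset\R$. Once that comparison is in hand, the content of the corollary is precisely the monotonicity of assumption \textbf{(C)} in its upper threshold, and the result follows immediately from Theorem~\ref{th1}.
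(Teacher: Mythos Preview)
Your proposal is correct and matches the paper's approach: the paper's proof simply states that the result ``follows directly from Theorem~\ref{th1} and the parameter choice in assumption {\bf(C)},'' which is exactly the monotonicity observation $\eta_c\le\Tilde\eta_c$ that you spell out in detail. The paper additionally remarks on an alternative derivation of the weaker asymptotic-stabilization conclusion via \cite[Theorem~10.14]{coron07} and the Hautus controllability test \eqref{haut-cont}, but this is a side comment rather than the core argument.
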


\begin{proof}
This follows directly from Theorem~\ref{th1} and the parameter choice in assumption {\bf(C)} of the theorem and its counterpart imposed in this corollary. On the other hand, the conclusion of this corollary while regarding the (generally weaker) local {\em asymptotic stabilization} of \eqref{cs} in the sense of Definition~\ref{stab-def} can be derived from \cite[Theorem~10.14]{coron07} married to the linear openness arguments employed above and using the fact that the modified rank condition
\begin{eqnarray}\label{haut-cont}
\text{rank}\;\big[A-\lambda I\mid B\big]=n\;\mbox{ for all }\;\lambda\in\Lambda(A)
\end{eqnarray}
ensures the controllability of the linearized system $\dot{x}=Ax+Bu$ by the {\em Hautus test} for {\em controllability} from \cite[Theorem~1]{hautus70} applied to the linearized continuous-time control system \eqref{lineari}.
\end{proof}

\begin{remark}\label{symmetric}
{\rm Note that control systems in which $A$ is a {\em symmetric matrix} automatically satisfy the assumption that $\Lambda(A)\subset\mathbb{R}$. This surely implies that $\Lambda_{+}(A)\subset\R$ in the assumptions of Theorem~\ref{th1} and that $\Lambda_{1}(A)\subset\R$ in the corresponding result of Theorem~\ref{th4} for the discrete-time control systems \eqref{dtcs}.}
\end{remark}

It is easy to see that condition \eqref{eqpf1} in Theorem~\ref{th1} may be significantly better than the corresponding one \eqref{haut-cont} in the proof of Corollary~\ref{spectrum}. Note also that in the case where $\Lambda_+(A)=\emptyset$, assumption {\bf(C)} of Theorem~\ref{th1} reduces simply to the requirement that the mapping $f$ in \eqref{cs} is linearly open around the equilibrium point $(0,0)$, i.e., in this case necessary and sufficient conditions of Theorem~\ref{th1} merge.

\begin{remark}\label{rem1}
{\rm It is worth mentioning that the exact covering bound $\cov f(0,0)$ used in Theorem~\ref{th1} and Corollary~\ref{spectrum} is precisely calculated by formula \eqref{cov-mod} with $\oz=(0,0)\in\R^n\times\R^m$ therein. Thus all the conditions of these results can be described entirely in terms of the {\em initial data} of the control system \eqref{cs} without involving any rank calculations.}
\end{remark}

\begin{corollary}\label{cor1a}
Assume that in the setting of Theorem~{\rm\ref{th1}} we have the condition $\Lambda_+(A)=\{0\}$. Then the linear openness of $f$ around $(0,0)$ ensures that the control system \eqref{cs} can be locally exponentially stabilized by means of continuous stationary feedback laws.
\end{corollary}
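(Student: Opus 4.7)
The plan is to recognize that Corollary~\ref{cor1a} is essentially a boundary case of Theorem~\ref{th1} in which the spectral threshold $\eta_c$ collapses to zero, so that the only remaining hypothesis from assumption~(C) is linear openness itself. Thus my strategy is simply to verify that the hypotheses here do imply~(C) and then quote Theorem~\ref{th1}.

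First, I would unpack the spectral side. Under the standing assumption $\Lambda_+(A)=\{0\}$, the quantity defined in \eqref{kappa} becomes
\[
\eta_c=\sup_{\lambda\in\Lambda_+(A)}\lambda=\sup\{0\}=0,
\]
so the inequality $\kappa>\eta_c$ reduces to the requirement $\kappa>0$. This is the only place where the spectral hypothesis is used.

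Next, I would extract the needed positivity from the openness side. By Definition~\ref{lin-open}, saying that $f$ is linearly open around $(0,0)$ means that \eqref{cov} is valid for some modulus $\kappa_0>0$; taking the supremum over all such moduli gives $\cov f(0,0)\ge\kappa_0>0$. Consequently I may select any real $\kappa$ satisfying $0<\kappa<\cov f(0,0)$ (for instance $\kappa:=\tfrac12\cov f(0,0)$ when the exact bound is finite, or any positive number otherwise). With this $\kappa$, assumption (C) of Theorem~\ref{th1} holds: $\cov f(0,0)>\kappa>0=\eta_c$. An application of the sufficiency part of Theorem~\ref{th1} then delivers a continuous stationary feedback law that locally exponentially stabilizes \eqref{cs}, which is the desired conclusion.

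I do not anticipate any genuine obstacle: the argument is a one-line reduction. The only mildly subtle point worth articulating explicitly is that qualitative linear openness already forces the quantitative lower bound $\cov f(0,0)>0$, which is exactly what makes the otherwise nontrivial condition $\kappa>\eta_c$ automatic once $\eta_c=0$. This is also why the corollary illustrates the concluding observation of the preceding discussion that, when $\Lambda_+(A)$ contributes nothing positive to $\eta_c$, the sufficient and necessary conditions of Theorem~\ref{th1} essentially coincide.
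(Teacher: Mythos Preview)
Your proposal is correct and follows exactly the paper's approach: the paper's own proof is the single sentence that condition~{\bf(C)} of Theorem~\ref{th1} holds automatically under the imposed assumptions, and you have merely spelled out why (namely $\eta_c=0$ and $\cov f(0,0)>0$). Nothing needs to be added or changed.
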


\begin{proof}
It is obvious to see that condition {\bf(C)} of Theorem~\ref{th1} holds automatically under the assumptions of this corollary, and thus the conclusion follows.
\end{proof}

\begin{remark}\label{sontag} {\rm
Observe that the setting of Corollary~\ref{cor1a} encompasses an important case where
$$
f(x,u)\colonequals\sum_{i=1}^{n}g_{i}(x)u_{i}
$$
with the vectors $\{g_{i}(0)\}_{i=1}^{n}$ being linearly independent. It is worth noting that the latter setting covers the situation of the example presented in \cite{sontag99} showing that the sufficiency of the openness property in Brockett's theorem fails. Indeed, in Sontag's example all the assumptions of Corollary~\ref{cor1a} are satisfied {\em except} the linear openness of $f$ around the origin.}
\end{remark}

\begin{corollary}\label{th2}
Assume that in the framework of Theorem~{\rm\ref{th1}} we have
\begin{eqnarray}\label{lcs}
f(x,u)\colonequals Ax+Bu,
\end{eqnarray}
with $\Lambda_{+}(A)\subset\R$ and let $g\colon\R^n\times\R^m\to\R^n$ be of class $C^{1}$ around the origin with $g(0,0)=0$, $\nabla_x g(0,0)=0$, and $\nabla_u g(0,0)=0$. Then the validity of condition {\bf(C)} from Theorem~{\rm\ref{th1}} ensures that the $($slightly$)$ perturbed nonlinear control system
\begin{eqnarray}\label{per-cs}
\dot{x}=Ax+Bu+g(x,u)
\end{eqnarray}
can be locally exponentially stabilized by means of continuous stationary feedback laws.
\end{corollary}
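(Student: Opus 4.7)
My plan is to reduce Corollary~\ref{th2} to a direct application of the sufficiency part of Theorem~\ref{th1}, now applied to the right-hand side $\widetilde f(x,u):=Ax+Bu+g(x,u)$ of the perturbed system~\eqref{per-cs} rather than to the unperturbed linear $f(x,u)=Ax+Bu$. Since $f$ is smooth and $g\in C^{1}$ around $(0,0)$ with $g(0,0)=0$, the mapping $\widetilde f$ is $C^{1}$ on a neighborhood of $(0,0)$ and satisfies $\widetilde f(0,0)=0$. The hypotheses $\nabla_{x}g(0,0)=0$ and $\nabla_{u}g(0,0)=0$ ensure that the partial Jacobians of $\widetilde f$ at the origin coincide with $A$ and $B$ respectively, so the linearization of \eqref{per-cs} at the equilibrium is still $\dot x=Ax+Bu$, and in particular the spectral datum $\Lambda_{+}(A)\subset\R$ is unchanged.

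The key step is to transfer assumption~{\bf(C)} from $f$ to $\widetilde f$. Because $g\in C^{1}$ around $(0,0)$ with vanishing Jacobian there, $g$ is strictly differentiable at $(0,0)$ with $\nabla g(0,0)=0$, and the $\limsup$ representation of the exact Lipschitzian bound recalled in Section~\ref{sec1a} gives
$$
\lip g(0,0)=\limsup_{(z,u)\to(0,0)}\frac{\|g(z)-g(u)\|}{\|z-u\|}=\|\nabla g(0,0)\|=0.
$$
Applying the Lipschitzian perturbation stability of linear openness in the form already invoked in the proof of Theorem~\ref{th1} (the covering counterpart of \cite[Theorem~4.25]{mor06a}, together with the reciprocity \eqref{reg-cov}) to the decomposition $\widetilde f=f+g$, I obtain
$$
\cov\widetilde f(0,0)\ge \cov f(0,0)-\lip g(0,0)=\cov f(0,0)>\kappa,
$$
so $\widetilde f$ is linearly open around $(0,0)$ with covering modulus strictly exceeding $\kappa$, while $\kappa>\eta_{c}$ is preserved because $\eta_{c}$ depends only on $\Lambda_{+}(A)$. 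Hence $\widetilde f$ fulfills every hypothesis of Theorem~\ref{th1}, whose sufficiency part produces a continuous stationary feedback law $u\in C(\R^{n},\R^{m})$ with $u(0)=0$ that locally exponentially stabilizes the closed-loop system $\dot x=\widetilde f(x,u(x))$, which is exactly \eqref{per-cs}.

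The only point demanding some care is confirming that the perturbation result applies with the infinitesimal Lipschitzian bound $\lip g(0,0)=0$ rather than with a fixed Lipschitz constant on a preassigned neighborhood. This is handled by shrinking the neighborhood on which $\widetilde f$ is considered until the local Lipschitz constant of $g$ falls below any chosen $\varepsilon<\cov f(0,0)-\kappa$, which is possible precisely because $\nabla g(0,0)=0$; once a suitable neighborhood is fixed, the linear openness bound above holds verbatim and the rest of the argument is purely an invocation of Theorem~\ref{th1}.
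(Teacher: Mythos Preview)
Your proof is correct and follows the paper's approach exactly: apply Theorem~\ref{th1} to $\widetilde f=f+g$, observing that all hypotheses transfer because $\nabla\widetilde f(0,0)=\nabla f(0,0)=[A\mid B]$. The Lipschitzian-perturbation detour is harmless but unnecessary, since the equality of Jacobians combined with formula~\eqref{cov-mod} already yields $\cov\widetilde f(0,0)=\cov f(0,0)$ directly.
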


\begin{proof}
This follows directly from Theorem~\ref{th1} applied to the perturbed control system \eqref{per-cs}.
\end{proof}

Having in mind these observations and the corresponding developments presented in \cite{coron07}, we arrive at the following two corollaries, which are consequences of Corollary~\ref{spectrum} and its proof. For the first one, the reader is referred to \cite[Definition~3.2]{coron07}, where the notion of {\em small-time local controllability} of \eqref{cs} is formulated.

\begin{corollary}\label{small time}
Let all the assumptions of Corollary~{\rm\ref{spectrum}} hold. Then the control system \eqref{cs} is small-time locally controllable at the equilibrium point $(0,0)$.
\end{corollary}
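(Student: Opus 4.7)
The plan is to leverage the perturbation argument already deployed in the proofs of Theorem~\ref{th1} and Corollary~\ref{spectrum}, pushing it across the \emph{entire} spectrum $\Lambda(A)$, and then to invoke the classical linearization principle: controllability of the linearization of a smooth nonlinear system at an equilibrium implies small-time local controllability of the nonlinear system at that equilibrium.

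First, I would fix an arbitrary $\lambda\in\Lambda(A)\subset\R$ and form the perturbation $f_\lambda(x,u)\colonequals f(x,u)-\lambda x$. The Lipschitz modulus of $g_\lambda(x,u)\colonequals-\lambda x$ at the origin is $|\lambda|\le\tilde\eta_c<\kappa<\cov f(0,0)$, so the same linear-openness stability result under small Lipschitzian perturbations used to derive \eqref{cov-est} gives $\cov f_\lambda(0,0)>\kappa-|\lambda|>0$. Theorem~\ref{lg} then forces the Jacobian $\nabla f_\lambda(0,0)=[A-\lambda I\mid B]$ to have full rank, yielding the rank condition \eqref{haut-cont}. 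The Hautus test for controllability \cite[Theorem~1]{hautus70} immediately produces controllability of the linearized system \eqref{lineari}.

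To close out, I would appeal to the standard linearization theorem (e.g., \cite[Theorem~3.8]{coron07}) asserting that controllability of the linearization at an equilibrium of a $C^1$ control system implies small-time local controllability of the nonlinear system at that equilibrium; this delivers the desired conclusion for \eqref{cs} at $(0,0)$.

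The main obstacle is essentially none beyond what is already handled in Corollary~\ref{spectrum}: the perturbation step must be run for \emph{every} $\lambda\in\Lambda(A)$ rather than only $\lambda\in\Lambda_+(A)$, which is precisely what the enlarged quantity $\tilde\eta_c$ in the hypothesis accommodates, and the hypothesis $\Lambda(A)\subset\R$ ensures that the relevant Lipschitzian perturbations remain real-valued so that Theorem~\ref{lg} applies directly. Once the Hautus rank condition is available across the whole spectrum, the remainder is textbook linearization theory.
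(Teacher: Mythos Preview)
Your proposal is correct and follows essentially the same route as the paper: you establish the full Hautus rank condition \eqref{haut-cont} via the perturbation argument (exactly what the paper's proof of Corollary~\ref{spectrum} already provides and what the paper's proof here simply cites), and then invoke \cite[Theorem~3.8]{coron07} to pass from controllability of the linearization to small-time local controllability of \eqref{cs}. The only difference is cosmetic: you spell the perturbation step out in full, whereas the paper just refers back to the proof of Corollary~\ref{spectrum}.
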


\begin{proof}
It is shown in \cite[Theorem~3.8]{coron07} that the controllability of the linearized control system \eqref{lineari} ensures the small-time local controllability of \eqref{cs}. On the other hand, the proof of Corollary~\ref{spectrum} verifies that the assumptions imposed therein yield controllability of the linearized control \eqref{lineari} via the Hautus test for controllability from \cite[Theorem~1]{hautus70}. Thus we get the claimed result.
\end{proof}

The next result shows that the linear openness and spectrum assumptions of Corollary~\ref{spectrum} for the case of linear autonomous continuous-time  control systems ensure actually {\em global controllability} in time $T$ (as defined, e.g., in \cite{coron07}) of not only the system itself, but also of its {\em bounded} perturbations.

\begin{corollary}\label{global}
Consider the linear version of \eqref{cs} with $f$ defined by \eqref{lcs}. Assume that $\Lambda(A)\subset\R$ and that $f$ is linearly open around $(0,0)$ with $\cov f(0,0)>\kappa$ for some constant $\kappa>0$ satisfying
\begin{eqnarray*}
\kappa>\Tilde\eta_c\colonequals\max_{\lambda\in\Lambda(A)}\lambda.
\end{eqnarray*}
Given in addition a continuous function $g\in C^1(\R^n,\R^n)$, suppose that it is bounded on $\R^n$. Then for each $T>0$ the boundedly perturbed control system
\begin{eqnarray}\label{bound}
\dot{x}=Ax+Bu+g(x)
\end{eqnarray}
is globally controllable in the fixed time $T$.
\end{corollary}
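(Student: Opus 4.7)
The plan is to first derive controllability of the linear part $\dot x=Ax+Bu$ from the linear openness hypothesis, and then absorb the bounded nonlinear perturbation $g$ via a Schauder fixed-point argument. First I would repeat the linear openness perturbation reasoning used in the proofs of Theorem~\ref{th1} and Corollary~\ref{spectrum}. For every $\lambda\in\Lambda(A)\subset\R$, the function $f_\lambda(x,u)\colonequals f(x,u)-\lambda x$ satisfies $\cov f_\lambda(0,0)>\kappa-\lambda>0$ by the stability of linear openness under small Lipschitzian perturbations together with the reciprocity \eqref{reg-cov}. Theorem~\ref{lg} then forces $\nabla f_\lambda(0,0)=[A-\lambda I\mid B]$ to have full row rank for every $\lambda\in\Lambda(A)$, which is precisely the Hautus controllability test \eqref{haut-cont}. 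Consequently the pair $(A,B)$ is controllable, and the controllability Gramian $W_T\colonequals\int_0^T e^{(T-s)A}BB^*e^{(T-s)A^*}\,ds$ is invertible for every $T>0$.

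Next, fixing $T>0$ and arbitrary states $x_0,x_1\in\R^n$, I would introduce the operator $\Phi\colon C([0,T],\R^n)\to C([0,T],\R^n)$ built from the open-loop steering control $u_y(t)\colonequals B^*e^{(T-t)A^*}W_T^{-1}\bigl(x_1-e^{TA}x_0-\int_0^T e^{(T-s)A}g(y(s))\,ds\bigr)$ by the rule $\Phi(y)(t)\colonequals e^{tA}x_0+\int_0^t e^{(t-s)A}\bigl(Bu_y(s)+g(y(s))\bigr)\,ds$. A short calculation using the definition of $W_T$ shows that $\Phi(y)(T)=x_1$ for every $y\in C([0,T],\R^n)$, so any fixed point of $\Phi$ is automatically a trajectory of \eqref{bound} that steers $x_0$ to $x_1$ in time $T$ under the control $u_y$. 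Existence of a fixed point would then complete the proof.

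The uniform bound $\|g\|_\infty\le M<\infty$ makes $u_y$ bounded on $[0,T]$ by a constant independent of $y$, which in turn forces $\Phi(y)$ into a fixed bounded subset of $C([0,T],\R^n)$; differentiating in $t$ gives uniform equicontinuity, while continuity of $\Phi$ on $C([0,T],\R^n)$ follows from continuity of $g$ via dominated convergence. Applying the Ascoli--Arzel\`a theorem and Schauder's fixed point theorem to a sufficiently large closed ball of $C([0,T],\R^n)$ then produces the desired trajectory. The crux of the argument is the self-map/compactness step, which is precisely where the uniform boundedness of $g$ on all of $\R^n$ is used in an essential way: without it, the control $u_y$ could be unbounded in $y$ and no closed ball of $C([0,T],\R^n)$ would be preserved. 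The controllability/Hautus reduction, by contrast, follows immediately from the linear openness machinery already established earlier in the paper.
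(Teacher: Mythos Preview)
Your argument is correct. The first half---perturbing $f$ by $-\lambda x$, invoking the Lipschitz stability of linear openness, and concluding via Theorem~\ref{lg} that $[A-\lambda I\mid B]$ has full row rank for every $\lambda\in\Lambda(A)$---is precisely what the paper does, in the same spirit as the proofs of Theorem~\ref{th1} and Corollary~\ref{spectrum}.

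The difference lies in the second half. The paper does not carry out the fixed-point construction; it simply observes that once the Hautus test \eqref{haut-cont} holds (hence $(A,B)$ is controllable), the conclusion follows from \cite[Corollary~3.41]{coron07}. You instead supply a direct, self-contained proof of that cited result: build the steering control $u_y$ from the controllability Gramian, define the Picard-type map $\Phi$, and close with Arzel\`a--Ascoli and Schauder. Your verification that $\Phi(y)(T)=x_1$, that $\Phi$ is continuous, and that the uniform bound on $g$ forces $\Phi$ into a fixed ball with equicontinuous image is sound; this is in fact the standard route to such perturbed-controllability results and is essentially what lies behind Coron's corollary. So the two proofs agree on the variational-analysis reduction and diverge only in that the paper outsources the nonlinear step to a reference while you unpack it explicitly. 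Your version is longer but self-contained; the paper's is a two-line citation.
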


\begin{proof}
Similarly to the proof of Theorem~\ref{th1} we conclude that the assumptions imposed in this corollary ensure the validity of the Hautus test \eqref{haut-cont} for controllability of the linear control system under consideration. To arrive at the claimed conclusion, it remains to apply the result of \cite[Corollary~3.41]{coron07}.
\end{proof}

\begin{example}\label{bounded}
{\rm Consider the two-dimensional nonlinear control system from \cite[Exercise~3.42]{coron07}:
\begin{eqnarray*}
\dot x_1=x_2+g(x_1,x_2),\;\dot x_2=u\in\R,
\end{eqnarray*}
where $g(x_1,x_2)\colonequals-x_2$ in \eqref{bound} is unbounded. It is easy to check that this system is not globally controllable while all but the boundedness assumption of $g$ in Corollary~\ref{global} are satisfied.}
\end{example}

Finally in this section, we revisit the Brockett's classical setting \cite{brockett83} concerning {\em local asymptotic stabilization} of \eqref{cs} by means of {\em smooth} stationary feedback laws and derive the strengthening of his main (third) necessary condition (with the replacement of openness by {\em linear openness}) from the first one in \cite[Theorem~1]{brockett83} under an additional assumption on the collection of eigenvalues with nonnegative real parts.

\begin{theorem}\label{th3}
Let $f$, $A$, and $B$ be as in Theorem~{\rm\ref{th1}}, and assume that
\begin{eqnarray}\label{lambda+}
\Lambda_+(A)=\big\{\lambda\in\Lambda(A)\big|\;\lambda\in\mathbb{C}\;\;{\rm with }\;\;{\rm Re}(\lambda)>0\big\}
\end{eqnarray}
for the set $\Lambda_+(A)$ defined in \eqref{lamb+}. Then the local asymptotic stabilization of system \eqref{cs} by means of continuously differentiable stationary feedback laws implies that $f$ is linearly open around the equilibrium $(0,0)$.
\end{theorem}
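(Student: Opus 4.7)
The plan is to derive the conclusion directly from the spectral hypothesis \eqref{lambda+} via the Lyusternik--Graves characterization recalled in Theorem~\ref{lg}, rather than through the stabilization hypothesis itself. The key observation is that the asserted equality $\Lambda_+(A)=\{\lambda\in\Lambda(A):\mathrm{Re}(\lambda)>0\}$, combined with the defining property $\mathrm{Re}(\lambda)\ge 0$ for membership in $\Lambda_+(A)$, excludes eigenvalues of $A$ lying on the imaginary axis. In particular $0\notin\Lambda(A)$, so the square matrix $A=\nabla_x f(0,0)$ is invertible.

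Given this, I would complete the argument in two short steps. First, invertibility of $A$ yields $\mathrm{rank}\,A=n$, and appending the columns of $B$ can only preserve this full row rank, so the Jacobian $\nabla f(0,0)=[A\mid B]\colon\R^n\times\R^m\to\R^n$ is surjective. Second, by the sufficiency direction of Theorem~\ref{lg} applied at $\oz=(0,0)$, surjectivity of $\nabla f(0,0)$ for the $C^1$ mapping $f$ is equivalent to the linear openness of $f$ around $(0,0)$, with exact covering bound given by the formula \eqref{cov-mod}.

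In this approach the smooth asymptotic stabilization hypothesis plays a contextual role, framing the theorem as the promised strengthening of Brockett's necessary openness condition to linear openness under a hyperbolicity-type assumption on $A$. If one wishes to incorporate it, the standard Lyapunov linearization observation would only record that the spectrum of $A+B\,\nabla u(0)$ lies in the closed left half-plane, but this information is not required to obtain the rank condition above. Correspondingly, I do not foresee a substantive technical obstacle: the only real step is noticing that the exclusion of zero-real-part eigenvalues in \eqref{lambda+} already forces $A$ to be invertible, after which Theorem~\ref{lg} closes the argument.
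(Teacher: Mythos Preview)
Your argument is correct, and in fact it exposes a point the paper's own proof somewhat obscures: under the hypothesis \eqref{lambda+}, the conclusion follows without ever invoking the stabilization assumption. Since \eqref{lambda+} forces $A$ to have no eigenvalues on the imaginary axis, in particular $0\notin\Lambda(A)$, the matrix $A=\nabla_x f(0,0)$ is invertible, $\mathrm{rank}\,[A\mid B]=n$ holds automatically, and Theorem~\ref{lg} finishes the job. This is exactly what you do, and it is a valid and strictly shorter proof.

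The paper proceeds differently: it applies Brockett's first necessary condition from \cite[Theorem~1(i)]{brockett83} to deduce, from the smooth asymptotic stabilization hypothesis, that the linearized system has no uncontrollable modes with $\mathrm{Re}(\lambda)>0$; it then combines this with \eqref{lambda+} (no modes with $\mathrm{Re}(\lambda)=0$) to obtain the Hautus test \eqref{eqpf1}, and finally extracts the rank condition \eqref{eqrem2} at $\lambda=0$. But at that last step one is precisely using that $0$ is not an eigenvalue of $A$, which is already enough on its own---so the detour through Brockett's condition and the Hautus test is, for the stated conclusion, unnecessary. What the paper's route buys is a narrative link to Brockett's framework and to the proof structure of Theorem~\ref{th1}; what your route buys is brevity and the clarifying observation that the stabilization hypothesis is, under \eqref{lambda+}, redundant for the linear-openness conclusion.
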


\begin{proof}
Brockett's {\em first necessary condition} in \cite[Theorem~1(i)]{brockett83} tells us that the local asymptotic stabilization of the control system \eqref{cs} by means of smooth feedback laws implies that the linearized system \eqref{lineari} should have no uncontrollable modes associated with eigenvalues whose real part is positive. On the other hand, condition \eqref{lambda+} means that that there are no purely imaginary eigenvalues in the collections of eigenvalues with nonnegative real parts $\Lambda_+(A)$ of the matrix $A$. Taking into account the definition of $\Lambda_+(A)$ in \eqref{lamb+} and combining it with the imposed assumption \eqref{lambda+} ensure that the Hautus test \eqref{eqpf1} for asymptotic controllability of \eqref{lineari} holds. This readily yields, as in the proof of the necessity in Theorem~\ref{th1}, that the simple rank condition \eqref{eqrem2} is satisfied. The latter amounts to the surjectivity of the derivative operator $\nabla f|_{(0,0)}\colon\R^n\times\R^m\to\R^n$. Employing the sufficiency part of Theorem~\ref{lg} verifies the linear openness of $f$ around $(0,0)$ and thus completes the proof of the theorem.
\end{proof}

\begin{remark}\label{1cond}
{\rm Note that the aforementioned first necessary condition in \cite[Theorem~1]{brockett83} is different from the Hautus test for asymptotic controllability as, e.g., for the system
\begin{eqnarray*}
\dot{x}_{1}=u_{1}^{3},\;\dot{x}_{2}=u_{2}^{3}.
\end{eqnarray*}
Assumption \eqref{lambda+} imposed in Theorem~\ref{th3} merges these two conditions.}
\end{remark}

Similarly to Corollary~\ref{cor1} of Theorem~\ref{th1} in the case of local exponential stabilization, we now arrive at the corresponding consequence of Theorem~\ref{th3} for the local asymptotic stabilization of \eqref{cs}, where the form of $f$ in \eqref{cs} is given by \eqref{cor1-cs}.

\begin{corollary}\label{cor3}
Let the form of $f$ in \eqref{cs} be given by \eqref{cor1-cs}, and let $A$ and $B$ be as in Theorem~\ref{th1}. Assume in addition that the collection of eigenvalues with nonnegative real parts $\Lambda_+(A)$ of $A$ satisfies condition \eqref{lambda+}. If $d<n$, then the control system \eqref{cs} cannot be locally asymptotically stabilized by means of continuously differentiable stationary feedback laws.
\end{corollary}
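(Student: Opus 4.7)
The plan is to mirror the argument used in Corollary~\ref{cor1}, replacing the necessity part of Theorem~\ref{th1} (which concerns local exponential stabilization) by Theorem~\ref{th3} (which concerns local asymptotic stabilization under the extra hypothesis~\eqref{lambda+} on $\Lambda_+(A)$). Accordingly, I would argue by contradiction.

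Concretely, suppose that the control system \eqref{cs} with $f$ of the special form \eqref{cor1-cs} is locally asymptotically stabilized by some continuously differentiable stationary feedback law. Since the hypothesis \eqref{lambda+} on $\Lambda_+(A)$ is in force, Theorem~\ref{th3} applies and forces $f$ to be linearly open around the equilibrium $(0,0)$. The characterization in Theorem~\ref{lg}, applied to the $C^{1}$ mapping $f\colon\R^{n}\times\R^{m}\to\R^{n}$, is then equivalent to the surjectivity of $\nabla f(0,0)$, i.e., to the rank condition $\text{rank}\,[A\mid B]=n$ from \eqref{eqrem2}.

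To reach the desired contradiction, I would exploit the structural hypothesis. Set $W\colonequals\vecspan\{g_{0}(\cdot),\hdots,g_{m}(\cdot)\}\subset\R^{n}$, a linear subspace of dimension $d$. Every value of each $g_{i}$ lies in $W$, so approximating $\nabla g_{i}(0)v$ by the difference quotients $t^{-1}[g_{i}(tv)-g_{i}(0)]\in W$ and passing to the limit gives $\mathrm{Im}\,\nabla g_{i}(0)\subset W$ for $i=0,\ldots,m$. Since $A=\nabla g_{0}(0)$ and the columns of $B$ are precisely $g_{1}(0),\ldots,g_{m}(0)$, the column space of $[A\mid B]$ is contained in $W$, whence $\text{rank}\,[A\mid B]\le d<n$. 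This contradicts the rank identity deduced above and proves the claim.

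I do not anticipate any real obstacle beyond the invocation of Theorem~\ref{th3}; the remainder is the same elementary rank computation behind Brockett's first remark on \cite[p.~187]{brockett83}, transplanted from the openness setting into the linear-openness one via Theorem~\ref{lg}. The only subtlety worth flagging is the passage from a subspace constraint on the values of $g_{i}$ to the same constraint on the derivative $\nabla g_{i}(0)$, but this is immediate from closedness of $W$ under limits of difference quotients.
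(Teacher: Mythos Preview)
Your proposal is correct and follows essentially the same line as the paper: argue by contradiction, invoke Theorem~\ref{th3} to obtain linear openness of $f$ around $(0,0)$, and then contradict this using the dimensional constraint from the form~\eqref{cor1-cs} with $d<n$. The only cosmetic difference is in the final step: the paper passes from linear openness to ordinary openness and then cites Brockett's remark (the image of $f$ lies in the $d$-dimensional subspace $W$, so $f$ cannot be open), whereas you pass from linear openness to the rank identity~\eqref{eqrem2} via Theorem~\ref{lg} and then observe that the column space of $[A\mid B]$ sits inside $W$; both are equivalent renderings of the same observation.
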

\begin{proof}
Suppose on the contrary that the system under consideration can be locally asymptotically stabilized by means of continuously differentiable feedback laws. Then Theorem~\ref{th3} ensures that $f$ is linearly open around $(0,0)$ and hence it enjoys the openness property \eqref{open-prop} at this point. This contradicts the necessary condition for local asymptotical stabilization by smooth stationary feedback laws given by Brockett's theorem \cite[Theorem 1]{brockett83} and the first remark to it in \cite{brockett83} while thus justifying the conclusion of this corollary.
\end{proof}

\section{Asymptotic Stabilization of Discrete-Time Control Systems}\label{sec:disc}

In this section we implement the above techniques of variational analysis, together with known facts of control theory, for the case of nonlinear {\em discrete-time} control systems
\begin{align}
x_{k+1}=f(x_{k},u_{k}),\;k=0,1,\hdots,\label{dtcs}
\end{align}
where the index $k$ signifies discrete time. Our achievement here are more modest in comparison with the case of continuous-time control system in Section~\ref{sec2}. Namely, we establish only {\em sufficient} conditions (involving linear openness) for local asymptotic stabilization of \eqref{dtcs} by means of continuous stationary feedback laws. The possibility of deriving sufficient conditions for local exponential stabilization of \eqref{dtcs} as well as establishing advanced necessary conditions for both asymptotic and exponential versions of local stabilization by continuous and smooth stationary feedback laws constitutes important {\em open questions}.

To proceed, recall that the notion of {\em local asymptotic stabilization} of \eqref{dtcs} by means of continuous stationary feedback laws is formulated in a similar way to Definition~\ref{stab-def} with the replacement of the differential closed-loop system  \eqref{closed-loop} by its discrete counterpart
\begin{eqnarray*}
x_{k+1}=f\big(x_k,u(x_k)\big),\;k=0,1,\hdots.
\end{eqnarray*}
An interesting necessary condition in this direction, by using {\em smooth} stationary feedback laws, was obtained in \cite{lin-byr94} in terms of the openness property \eqref{open-prop} for the mapping $(x,u)\mapsto x-f(x,u)$.

The next theorem shows that the {\em linear openness} property of the mapping $f$, combined with appropriate relationships between its exact bound and the collection of eigenvalues which lie on or outside the unit circle of the matrix $A$ in the case of discrete-time control systems \eqref{dtcs}, ensures the local asymptotic stabilization of \eqref{dtcs} by continuous stationary feedback laws. To formulate the result, define the partial derivative matrices $A$ and $B$ as in \eqref{A} and consider the collection of {\em eigenvalues which lie on or outside the unit circle} of the (complex) spectrum $\Lambda(A)$ of $A$ in the case of \eqref{dtcs} given by
\begin{eqnarray}\label{lambda1}
\Lambda_1(A)\colonequals\big\{\lambda\in\Lambda(A)\big|\;\lambda\in\mathbb{C}\;\mbox{ \textnormal{with} }\;|\lambda|\ge 1\big\}.
\end{eqnarray}

\begin{theorem}\label{th4}
Let $f\colon\R^n\times\R^m\to\R^n$ be of class $C^{1}$ in a neighborhood of the equilibrium pair $(0,0)$, let the matrices $A$ and $B$ be defined in \eqref{A}, and let $\Lambda_1(A)\subset\R$. Assume in addition that:

{\bf(D)} $f$ is linearly open around $(0,0)$ with $\cov f(0,0)>\kappa$ for some positive constant $\kappa$ satisfying
\begin{eqnarray}\label{kappa-d}
\kappa>\eta_d\colonequals\sup_{\lambda\in\Lambda_1(A)}\lambda,
\end{eqnarray}
where the latter inequality is automatic if $\Lambda_1(A)=\emp$. Then the discrete-time nonlinear control system \eqref{dtcs} can be locally asymptotically stabilized by means of continuous stationary feedback laws.
\end{theorem}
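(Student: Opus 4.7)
The plan is to mirror the sufficiency argument in the proof of Theorem~\ref{th1}, substituting the continuous-time Hautus criterion for asymptotic controllability by its discrete-time analog and substituting the Lee-Markus linearization principle for ODEs by the standard Lyapunov indirect method for iterations. The variational-analytic inputs remain unchanged: the surjectivity characterization of linear openness in Theorem~\ref{lg}, combined with the stability of the covering modulus under small Lipschitzian perturbations (the linear openness version of \cite[Theorem~4.25]{mor06a}).

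First, for each $\lambda\in\Lambda_1(A)$, I would introduce the perturbed mapping $f_\lambda(x,u):=f(x,u)-\lambda x$, whose Lipschitzian piece $g_\lambda(x,u):=-\lambda x$ has $\lip g_\lambda(0,0)=|\lambda|$. The hypotheses $\Lambda_1(A)\subset\R$ and {\bf(D)} should, after appropriate interpretation, keep $|\lambda|$ strictly below $\kappa<\cov f(0,0)$; the perturbation estimate then yields $\cov f_\lambda(0,0)>0$, and Theorem~\ref{lg} forces the Jacobian $\nabla f_\lambda(0,0)=[\,A-\lambda I\mid B\,]$ to have full row rank, i.e.,
\[
\operatorname{rank}\,[A-\lambda I\mid B]=n\quad\text{for every}\;\lambda\in\Lambda_1(A).
\]
This is precisely the discrete-time Hautus test for stabilizability of the linearization $x_{k+1}=Ax_k+Bu_k$, so there exists a constant gain $K\in\R^{m\times n}$ making $A+BK$ Schur stable. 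I would then close the loop with the linear (hence continuous) stationary feedback $u(x):=Kx$, note $u(0)=0$, and invoke the Lyapunov indirect method for maps: the Jacobian of $x\mapsto f(x,Kx)$ at the origin equals $A+BK$, whose Schur stability implies local exponential (and therefore asymptotic) stability of the origin for $x_{k+1}=f(x_k,u(x_k))$, delivering the desired conclusion.

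The principal obstacle will be carrying out the perturbation step uniformly over $\Lambda_1(A)$. Unlike $\Lambda_+(A)$ in the continuous case, where $\Lambda_+(A)\subset\R$ forces $\lambda\ge 0$ and makes $\sup\lambda$ coincide with the perturbation modulus $|\lambda|$, the set $\Lambda_1(A)\subset\R$ can contain eigenvalues with $\lambda\le -1$; for such $\lambda$ the modulus $|\lambda|$ need not be dominated by $\eta_d=\sup_\lambda\lambda$ as it appears literally in \eqref{kappa-d}. Passing from {\bf(D)} to the uniform inequality $|\lambda|<\kappa$ required by the perturbation theorem therefore calls for an additional argument --- e.g., reading the supremum in \eqref{kappa-d} as a supremum of absolute values when signs differ, or handling negative eigenvalues via a sign-reversed perturbation $+\lambda x$ --- after which the continuous-time blueprint transcribes directly.
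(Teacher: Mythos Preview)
Your plan follows the paper's proof almost verbatim: perturb $f$ by $g_\nu(x,u)=-\nu x$, invoke the stability of linear openness under Lipschitzian perturbations (the covering version of \cite[Theorem~4.25]{mor06a}) together with the necessity direction of Theorem~\ref{lg} to obtain $\operatorname{rank}[A-\lambda I\mid B]=n$ for every $\lambda\in\Lambda_1(A)$, recognize this as the discrete-time Hautus test, and close with \cite[Theorem~4]{hautus70} for discrete systems. Your explicit choice of a linear gain $K$ and appeal to the Lyapunov indirect method for maps simply unpacks that last citation; the paper bundles it into the reference to Hautus.

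The obstacle you flag is genuine, and the paper's own proof does not resolve it: the paper writes $\cov f_\lambda(0,0)>\kappa-\lambda>0$ for all $\lambda\in\Lambda_1(A)$, in effect replacing $\lip g_\lambda=|\lambda|$ by $\lambda$. That step is correct only when $\Lambda_1(A)\subset[1,\infty)$. If some real eigenvalue with $\lambda\le -1$ occurs, the perturbation estimate requires $\kappa>|\lambda|$, which \eqref{kappa-d} does not provide; for instance, with $A=\operatorname{diag}(-2,0)$ and $B$ the second standard basis vector one gets $\eta_d=-2$ and $\cov f(0,0)=1$, so {\bf(D)} holds for any $\kappa\in(0,1)$, yet the mode $\lambda=-2$ is uncontrollable and the linearization is not Schur-stabilizable. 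Your sign-reversed perturbation does not help either, since $\lip(\pm\lambda x)=|\lambda|$ regardless of sign. The argument (yours and the paper's) goes through cleanly only under the stronger reading $\eta_d=\sup_{\lambda\in\Lambda_1(A)}|\lambda|$, which is precisely the hypothesis appearing in Corollary~\ref{4b}.
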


\begin{proof}
We basically follow here the lines in the proof of Theorem~\ref{th1} for the continuous-time control system \eqref{cs} by using the same approach of variational analysis and the corresponding results of controllability and stabilization theories known for discrete-time control systems. For any $\nu>0$ and the discrete index $k=0,1,\hdots$ define the parametric family of vector functions
\begin{eqnarray*}
f_{\nu}(x_k,u_k)\colonequals f(x_k,u_k)+g_{\nu}(x_k,u_k)\;\mbox{ with }\;g_{\nu}(x_k,u_k)\colonequals-\nu x_k.
\end{eqnarray*}
Taking into account that $\lip g_{\nu}(0,0)=\nu$ and using the version of \cite[Theorem~4.25]{mor06a} for the stability of linear openness under Lipschitzian perturbations, we get the same lower estimate \eqref{cov-est} for the exact covering bound of $f$ around $(0,0)$ as in the proof of Theorem~\ref{th1}. Then remembering the choice of constants in condition {\bf (D)} imposed in this theorem, calculating $\eta_d$ as in \eqref{kappa-d} and employing the full rank condition for the Jacobian matrix $\nabla f_{\nu}(0,0)$ from the necessity part of Theorem~\ref{lg}, gives us the inequality
\begin{eqnarray*}
\cov f_{\lambda}(0,0)>\kappa-\lambda>0\;\mbox{ for all }\;\lambda\in\Lambda_1(A)
\end{eqnarray*}
with $\Lambda_1(A)$ defined in \eqref{lambda1}. This yields
\begin{eqnarray*}
\text{rank}\;\big[A-\lambda I\mid B\big]=n\;\mbox{ for all }\;\lambda\in\Lambda_1(A),
\end{eqnarray*}
which is the Hautus test for asymptotic controllability of the linearized discrete-time control system
\begin{eqnarray*}
x_{k+1}=Ax_k+Bu_k,\;k=0,1,\hdots,
\end{eqnarray*}
with $A$ and $B$ taken from \eqref{A}. Employing finally Hautus' stabilization theorem (see \cite[Theorem~4]{hautus70} for discrete-time control systems ensures the existence of a locally asymptotically stabilizing feedback law for \eqref{dtcs} and thus completes the proof of the result claimed in this theorem.
\end{proof}

Various consequences of Theorem~\ref{th4} can be derived similarly to those of Theorem~\ref{th1} in Section~\ref{sec2} for continuous-time control systems. Let us present just a few of them.

\begin{corollary}\label{4a}
Assume that in the framework of Theorem~{\rm\ref{th4}} the function $f$ is given by \eqref{lcs} with $\Lambda_{1}(A)\subset\R$, and let $g\colon\R^n\times\R^m\to\R^n$ be of class $C^{1}$ around the origin with $g(0,0)=0$, $\nabla_x g(0,0)=0$, and $\nabla_u g(0,0)=0$. Then the validity of condition {\bf(D)} from Theorem~{\rm\ref{th4}} ensures that the $($slightly$)$ perturbed nonlinear discrete-time control system
\begin{eqnarray*}
x_{k+1}=Ax_k+Bu_k+g(x_k,u_k),\;k=0,1,\hdots,
\end{eqnarray*}
can be locally exponentially stabilized by means of continuous stationary feedback laws.
\end{corollary}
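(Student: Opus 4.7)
The plan is to apply Theorem~\ref{th4} directly to the composite vector function $\tilde f(x,u):=Ax+Bu+g(x,u)$. First I would check that $\tilde f$ inherits every hypothesis imposed on the data in Theorem~\ref{th4}. The $C^1$ smoothness of $\tilde f$ around $(0,0)$ is immediate, and $\tilde f(0,0)=0$ holds because $g(0,0)=0$. A key observation is that, since $\nabla_x g(0,0)=0$ and $\nabla_u g(0,0)=0$, the partial Jacobians of $\tilde f$ at the equilibrium coincide with those of the linear part:
\[
\nabla_x\tilde f(0,0)=A,\qquad \nabla_u\tilde f(0,0)=B.
\]
In particular, the spectrum $\Lambda(\tilde A)=\Lambda(A)$ and hence the real-spectrum assumption $\Lambda_1(A)\subset\R$ and the eigenvalue supremum $\eta_d$ from \eqref{kappa-d} are unchanged when one passes from $f$ to $\tilde f$.

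The main step is to verify that linear openness transfers from $f$ to $\tilde f$ with essentially the same exact covering bound. I would invoke the same Lipschitzian perturbation stability result for linear openness that was used in the proof of Theorem~\ref{th4} (the discrete-time analog of \cite[Theorem~4.25]{mor06a}): if $f$ is linearly open around $(0,0)$ and $g$ has Lipschitz modulus $\ell$ around $(0,0)$, then $\tilde f=f+g$ is linearly open around $(0,0)$ with
\[
\cov\tilde f(0,0)\ge\cov f(0,0)-\ell.
\]
Since $g\in C^1$ around the origin with $\nabla g(0,0)=0$, one has $\lip g(0,0)=0$, so any $\ell>0$ is admissible after shrinking the neighborhood, and consequently $\cov\tilde f(0,0)\ge\cov f(0,0)>\kappa$. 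Thus assumption {\bf(D)} of Theorem~\ref{th4}, applied to $\tilde f$, is fulfilled with the same constant $\kappa$.

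Applying Theorem~\ref{th4} to $\tilde f$ now yields a continuous stationary feedback law $u(x)$ with $u(0)=0$ that locally asymptotically stabilizes the perturbed discrete-time system. To upgrade this to local \emph{exponential} stabilization, I would retrace the proof of Theorem~\ref{th4}: the Hautus test for asymptotic controllability of the (unchanged) linearization $x_{k+1}=Ax_k+Bu_k$ allows one to choose a \emph{linear} stabilizing feedback $u(x)=Kx$ with $\rho(A+BK)<1$ (this is precisely what the stabilization half of \cite[Theorem~4]{hautus70} provides in the discrete-time setting). Plugging this linear feedback into the perturbed system gives the closed-loop recursion $x_{k+1}=(A+BK)x_k+g(x_k,Kx_k)$, whose linearization at the origin is the Schur-stable matrix $A+BK$; a standard discrete-time Lyapunov/linearization argument then produces constants $M>0$, $\delta>0$, and $q\in(0,1)$ such that $\|x_k\|\le Mq^k\|x_0\|$ for every $\|x_0\|<\delta$, which is local exponential stabilization.

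The only subtle point, and the step I expect to be the main obstacle to a fully rigorous write-up, is the invocation of the perturbation stability of linear openness in the form $\cov\tilde f(0,0)\ge\cov f(0,0)-\lip g(0,0)$ when $\lip g(0,0)=0$: one must be careful that the neighborhood on which covering holds for $\tilde f$ can be chosen so that the residual Lipschitz constant of $g$ on it is strictly smaller than $\cov f(0,0)-\kappa$. This is a routine neighborhood-shrinking argument using the definition of $\lip g(0,0)$ together with strict differentiability of $g$ at the origin, but it is the place where the assumptions $\nabla_x g(0,0)=0$ and $\nabla_u g(0,0)=0$ genuinely enter.
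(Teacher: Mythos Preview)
Your core strategy---apply Theorem~\ref{th4} to the composite map $\tilde f(x,u)=Ax+Bu+g(x,u)$---is exactly the paper's approach; its proof is the single line ``Easily follows from Theorem~\ref{th4}.'' Two remarks are worth making.

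First, your detour through the Lipschitz perturbation stability result is unnecessary. Since $\nabla g(0,0)=0$, the full Jacobian of $\tilde f$ at the origin equals $[A\mid B]$, and Theorem~\ref{lg} computes the exact covering bound \emph{purely} from this Jacobian via \eqref{cov-mod}. Hence $\cov\tilde f(0,0)=\cov f(0,0)$ on the nose, and condition {\bf(D)} transfers immediately with no neighborhood-shrinking or $\varepsilon$-of-room argument needed. Your route is not wrong, just longer.

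Second, you correctly flag a genuine tension that the paper's one-line proof glosses over: Theorem~\ref{th4} concludes only local \emph{asymptotic} stabilization, whereas the corollary claims \emph{exponential} stabilization (and the paper itself, at the start of Section~\ref{sec:disc}, lists exponential stabilization of discrete-time systems as an open question). Your additional argument---use the Hautus test to produce a linear feedback $K$ with $\rho(A+BK)<1$ and then invoke the standard discrete-time linearization principle to get geometric decay for the nonlinear closed loop---is the right way to close this gap, and it goes beyond what the paper's own proof supplies.
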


\begin{proof}
Easily follows from Theorem~\ref{th4}.
\end{proof}

\begin{corollary}\label{4b}
Suppose that in the setting of Theorem~{\rm\ref{th4}} we have $\Lambda(A)\subset\R$ and that condition \eqref{kappa-d} in assumption {\bf(D)} is replaced by the following:
\begin{eqnarray*}
\kappa>\Tilde\eta_d\colonequals\max_{i\in\{1,\hdots,n\}}\big\{|\lambda_{i}|\;\big|\;\lambda_{i}\in\Lambda(A)\big\}.
\end{eqnarray*}
Then the nonlinear discrete-time control system \eqref{dtcs} can be locally asymptotically stabilized by means of continuous stationary feedback laws.
\end{corollary}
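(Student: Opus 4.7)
The plan is to deduce this directly from Theorem~\ref{th4}, as the new hypothesis $\kappa>\tilde\eta_d$ is a \emph{stronger} condition than $\kappa>\eta_d$ from assumption~\textbf{(D)} of that theorem. First, since $\Lambda_1(A)\subset\Lambda(A)\subset\R$, the standing requirement $\Lambda_1(A)\subset\R$ of Theorem~\ref{th4} is automatically fulfilled. Second, for every $\lambda\in\Lambda_1(A)$ one has $\lambda\le|\lambda|\le\tilde\eta_d$, so that
$$
\eta_d \;=\; \sup_{\lambda\in\Lambda_1(A)}\lambda \;\le\; \tilde\eta_d,
$$
with the inequality being vacuous when $\Lambda_1(A)=\emp$ by the convention $\sup\emp=-\infty$ adopted in \eqref{kappa-d}. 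Consequently any $\kappa>\tilde\eta_d$ satisfies a fortiori $\kappa>\eta_d$, assumption~\textbf{(D)} of Theorem~\ref{th4} holds, and the conclusion follows at once.

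A slightly sharper alternative route, paralleling the relation between Theorem~\ref{th1} and Corollary~\ref{spectrum} in the continuous-time case, is to repeat the perturbation argument used to prove Theorem~\ref{th4} but on the \emph{entire} spectrum rather than only on $\Lambda_1(A)$. For each $\lambda\in\Lambda(A)$, which by hypothesis is real and of absolute value at most $\tilde\eta_d<\kappa$, the Lipschitzian stability estimate \eqref{cov-est} applied to the shifted map $f_\lambda(x,u)\colonequals f(x,u)-\lambda x$ (whose Lipschitzian perturbation has modulus $|\lambda|<\kappa$) combined with the necessity part of Theorem~\ref{lg} yields
$$
\text{rank}\big[A-\lambda I\mid B\big] \;=\; n \quad\text{for all }\lambda\in\Lambda(A).
$$
This is precisely the Hautus test for \emph{controllability} (not merely asymptotic controllability) of the linearized discrete-time system $x_{k+1}=Ax_k+Bu_k$, and controllability of this linearization transfers to local asymptotic stabilizability of \eqref{dtcs} by a continuous stationary feedback through the standard discrete-time linearization principle.

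There is essentially no obstacle beyond a bookkeeping one: the argument is almost tautological given Theorem~\ref{th4}, and the only point requiring care is the edge case $\Lambda_1(A)=\emp$ in which $\eta_d=-\infty$, which is absorbed painlessly into the convention. The alternative route requires slightly more verification, namely that the discrete-time version of Hautus' theorem invoked in the proof of Theorem~\ref{th4} delivers genuine controllability once the rank condition extends to all of $\Lambda(A)$; this, however, is classical and requires no new machinery.
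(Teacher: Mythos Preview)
Your proposal is correct and follows essentially the same approach as the paper, which simply records that the result is ``a clear consequence of Theorem~\ref{th4}'' and notes the parallel with Corollary~\ref{spectrum}. Your first route makes explicit precisely why the stronger bound $\kappa>\tilde\eta_d$ implies assumption~{\bf(D)}, and your alternative route mirrors the paper's cross-reference to the controllability argument from Corollary~\ref{spectrum}.
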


\begin{proof}
It is a clear consequence of Theorem~\ref{th4}; cf.\ also the arguments in the proof of Corollary~\ref{spectrum} for continuous-time control systems \eqref{cs} with the reference to \cite{coron07}.
\end{proof}

The last corollary of Theorem~\ref{th4} presented here addresses discrete-time control systems \eqref{dtcs} for which the whole spectrum $\Lambda(A)$ of the matrix $A$ in \eqref{A} consists only of zeros. This setting encompasses, in particular, an important case of discrete-time control systems where
\begin{eqnarray*}
f(x_{k},u_{k})\colonequals\sum_{i=1}^{n}g_{i}(x_{k})u_{ik}
\end{eqnarray*}
in \eqref{dtcs}
with the vectors $\{g_{i}(0)\}_{i=1}^{n}$ being linearly independent.

\begin{corollary}\label{4c}
Assume that in the setting of Theorem~{\rm\ref{th4}} we have $\Lambda(A)=\{0\}$. Then the linear openness of the vector function $f$ around $(0,0)$ ensures that the discrete-time control system \eqref{dtcs} can be locally asymptotically stabilized by means of continuous stationary feedback laws.
\end{corollary}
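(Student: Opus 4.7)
The plan is to verify that Corollary~\ref{4c} is an immediate specialization of Theorem~\ref{th4}, so the bulk of the argument consists of checking that the hypotheses of that theorem are satisfied under the single standing assumption $\Lambda(A)=\{0\}$ together with linear openness of $f$ at $(0,0)$.

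First I would examine the set $\Lambda_1(A)$ defined in \eqref{lambda1}. Since $\Lambda(A)=\{0\}$ and $|0|=0<1$, the eigenvalue $0$ does not satisfy the condition $|\lambda|\ge 1$, so $\Lambda_1(A)=\emp$. In particular the inclusion $\Lambda_1(A)\subset\R$ holds vacuously, which is one of the blanket hypotheses of Theorem~\ref{th4}.

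Next I would verify assumption {\bf(D)}. By hypothesis $f$ is linearly open around $(0,0)$, so $\cov f(0,0)>0$ by Definition~\ref{lin-open}. Choose any $\kappa$ with $0<\kappa<\cov f(0,0)$; then $\cov f(0,0)>\kappa$. Because $\Lambda_1(A)=\emp$, the inequality $\kappa>\eta_d$ in \eqref{kappa-d} is automatic (as explicitly noted in the statement of Theorem~\ref{th4}). Thus condition {\bf(D)} is satisfied with this choice of $\kappa$.

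Finally I would invoke Theorem~\ref{th4} directly: all of its hypotheses now hold, so the discrete-time control system \eqref{dtcs} is locally asymptotically stabilizable by means of continuous stationary feedback laws, as claimed. There is no real obstacle here; the only point requiring a moment of care is the observation that $0\notin\Lambda_1(A)$, which collapses the spectral condition and lets linear openness alone carry the argument. The reference to Theorem~\ref{lg} for computing $\cov f(0,0)$ via \eqref{cov-mod} is available if one wishes to express the hypothesis in terms of the initial data, but is not needed for the deduction itself.
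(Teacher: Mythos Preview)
Your proof is correct and essentially matches the paper's approach: the paper's one-line proof cites Corollary~\ref{4b} (which in turn is a direct consequence of Theorem~\ref{th4}), while you go straight to Theorem~\ref{th4}, but in both cases the content is the observation that $\Lambda(A)=\{0\}$ makes the spectral constraint in {\bf(D)} vacuous so that linear openness alone suffices.
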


\begin{proof}
This clearly follows from Corollary~\ref{4b}.
\end{proof}

\section{Examples}\label{sec3}

In this section we present two examples illustrating the applications of the main result in Theorem~\ref{th1}. The first example deals with a two-dimensional control system while the second example deals with a more complicated three-dimensional one.

\begin{example}\label{ex3}
{\rm Consider the following continuous-time control system:
\begin{align}
\dot{x}_{1}&=x_{1}^{3}+x_{2},\label{cs_ex1_eq1}\\
\dot{x}_{2}&=u.\label{cs_ex1_eq2}
\end{align}
It is easy to verify that the pair $(0,0)$ is an equilibrium for \eqref{cs_ex1_eq1}--\eqref{cs_ex1_eq2}. Calculating the matrices $A$ and $B$ from \eqref{A} for this system at $(0,0)$ gives us
\begin{align}
A=\begin{bmatrix}
0 & 1\\
0 & 0
\end{bmatrix},\;
B=\begin{bmatrix}
0\\
1
\end{bmatrix}.\nonumber
\end{align}
By checking the rank condition \eqref{eqrem2} or directly by Definition~\ref{lin-open} we confirm that $f$ in \eqref{cs_ex1_eq1}--\eqref{cs_ex1_eq2} is linearly open around $(0,0)$. Furthermore, we can see that all the assumptions of Corollary~\ref{cor1a} are satisfied, and hence system \eqref{cs_ex1_eq1}--\eqref{cs_ex1_eq2} can be locally asymptotically stabilized by means of continuous stationary feedback laws. In fact, it is not hard to check that the feedback control function $u(x)\colonequals-x_{1}-x_{2}$ gives us one possible continuous stationary feedback law that locally asymptotically stabilizes system \eqref{cs_ex1_eq1}--\eqref{cs_ex1_eq2}.}
\end{example}

\begin{example}\label{ex4}
{\rm Consider next the following continuous-time control system:
\begin{align}
\dot{x}_{1}&=x_{1}^{3}+x_{3},\label{cs_ex2_eq1}\\
\dot{x}_{2}&=x_{1}+x_{3},\label{cs_ex2_eq2}\\
\dot{x}_{3}&=0.1x_{1}+x_{2}^{2}+u.\label{cs_ex2_eq3}
\end{align}
It is easy to verify that $(0,0)$ is an equilibrium pair for \eqref{cs_ex2_eq1}--\eqref{cs_ex2_eq3}. Thus we calculate by \eqref{A} the matrices
\begin{align}
A=\begin{bmatrix}
0 & 0 & 1\\
1 & 0 & 1\\
0.1 & 0 & 0
\end{bmatrix},\;
B=\begin{bmatrix}
0\\
0\\
1
\end{bmatrix}.\nonumber
\end{align}
The linear openness of $f$ in \eqref{cs_ex2_eq1}--\eqref{cs_ex2_eq3} can be deduced from the rank condition \eqref{eqrem2}, since
\begin{align}
\text{rank}\;\big[A\mid B\big]=3.\nonumber
\end{align}
Furthermore, we calculate that $\cov f(0,0)=0.6144$ and $\eta_c=0.3162$, and so assumption {\bf (C)} of Theorem~\ref{th1} is satisfied. Hence Theorem~\ref{th1} tells us that system \eqref{cs_ex2_eq1}--\eqref{cs_ex2_eq3} can be locally asymptotically stabilized by means of continuous stationary feedback laws. In fact, the feedback control function $u(x)\colonequals-x_{1}-x_{2}-x_{3}$ is one of the possible feedback laws that locally asymptotically stabilizes the control system \eqref{cs_ex2_eq1}--\eqref{cs_ex2_eq3}.}
\end{example}

\section{Concluding Remarks}\label{sec4}

The main trust of this paper is in suggesting a new approach of {\em variational analysis} to study feedback stabilization of nonlinear control systems. The emphasis of this approach is on applying the widely understood and comprehensively characterized {\em well-posedness} properties of single-valued and set-valued mappings to the issues under consideration instead of conventional topological methods related to degree theory, etc. Implementing these variational ideas in the context of {\em continuous-time} nonlinear control systems \eqref{cs} with smooth dynamics, we replace the topological {\em openness} property in the seminal Brockett's theorem by the variational {\em linear openness}. In this way, we are able to establish the {\em sufficiency} of the linear openness, together with a certain condition on its moduli, for the exponential stabilization of \eqref{cs} by means of continuous stationary feedback laws, while Brockett's theorem and its extensions assert the necessity of the openness property for asymptotic stabilization. We also derive new conditions ensuring the {\em necessity} of linear openness for both local exponential and asymptotic stabilization of \eqref{cs} by means of stationary continuous as well as smooth feedback laws. Some (but by far not all) counterparts of the obtained results are established by this approach for asymptotic feedback stabilization of nonlinear {\em discrete-time} control systems \eqref{dtcs}.

We believe that the suggested variational approach has strong potential to the study of feedback stabilization and related issues for a much broader class of systems and topics in comparison with those considered in this paper. A brief discussion on the available machinery of variational analysis and generalized differential given in Section~\ref{sec1a} sheds some light on appropriate tools and results. Among such topics we mention controllability and the minimum time function in the framework of \cite{col-nguyen13}, discontinuous feedback laws in the vein of \cite{ancona08,clss97,mor11}, extensions to the case of control systems governed by differential inclusions in both finite and infinite dimensions \cite{adly12,adly16}, feedback stabilization of partial differential control systems \cite{coron17,coron07,mor11}, etc.

\section*{Acknowledgments}
The work presented in this paper was carried out while the first author was a postdoctoral fellow at the Institute for Mathematics and its Applications (IMA) during the IMA's annual program on {\em Control Theory and its Applications}. Farhad Jafari also gratefully acknowledges the support and hospitality provided by IMA, where this work was initiated and where he was a visiting professor during the IMA's annual program on {\em Control Theory and its Applications}. The research of Boris Mordukhovich was partly supported by the US National Science Foundation under grant DMS-$1512846$, by the US Air Force Office of Scientific Research under grant $15$RT$0462$, and by the Ministry of Education and Science of the Russian Federation (Agreement number 02.a03.21.0008). This research was completed during Mordukhovich's stay at the University of Padova (February-March 2017), and he gratefully acknowledges fruitful discussions with and the warm hospitality provided by Prof.\ Giovanni Colombo.

\end{document}